 \renewcommand{\epsilon}{\varepsilon}
\newcommand{\Z}{{\mathbb Z}}
\newcommand{\R}{{\mathbb R}}
\newcommand{\C}{{\mathbb C}}
\newcommand{\p}{{\mathbb P}}
\newcommand{\dd}{\operatorname{d}}
\newcommand{\ddd}{\operatorname{d\!}}
\newcommand{\Def}{\operatorname{def}}
\newcommand{\cA}{{\mathcal A}}
\newcommand{\cL}{{\mathcal L}}
\newcommand{\cE}{{\mathcal E}}
\newcommand{\cV}{{\mathcal V}}
\newcommand{\cW}{{\mathcal W}}
\newcommand{\cO}{{\mathcal O}}
\newcommand{\E}{{\bf E}}
\newcommand{\W}{{\bf W}}
\theoremstyle{remark}
\newtheorem{Definition}{Definition}
\def \p{\mathbb{P}}
\def \A{\mathbb{A}}
\def \/{\big/}
\def \SL{\textup{SL}}
\def \GL{\textup{GL}}
\def \dim{\textup{dim}}
\def \codim{\textup{codim}}
\def \ker{\textup{ker}}
\def \Spec{\textup{Spec}}
\theoremstyle{plain}
\newtheorem{Th}{Theorem}[section]
 \newtheorem{Proposition}{Proposition}[section]
 \newtheorem{Theorem}{Theorem}[section]
 \newtheorem{Corollary}{Corollary}[section]
  \newtheorem{Lemma}{Lemma}[section]
 \newtheorem*{NoNumberTheorem}{Theorem}
\theoremstyle{definition}
\newtheorem{Rem}[Th]{Remark}
\begin{document} 

\definecolor{adr}        {cmyk}{0.99,0.,0.,0.1}
\newcommand{\ed}[1]{\textbf{\textcolor{adr}{#1}}}

\title{Overdetermined  Systems of Equations on Toric, Spherical, and Other Algebraic Varieties}
\author{Leonid Monin}
\maketitle

\begin{abstract}
Let $E_1,\ldots,E_k$ be a collection of linear series on an irreducible algebraic variety $X$ over $\C$ which is not assumed to be complete or affine. That is, $E_i\subset H^0(X, \cL_i)$ is a finite dimensional subspace of the space of regular sections of line bundles $\cL_i$. Such a collection is called overdetermined  if the generic system
\[
s_1 = \ldots = s_k = 0,
\]
with $s_i\in E_i$ does not have any roots on $X$. In this paper we study consistent systems which are given by an overdetermined  collection of linear series.
Generalizing the notion of a resultant hypersurface we define a consistency variety $R\subset \prod_{i=1}^k E_i$ as the closure of the set of all systems which have at least one common root and study general properties of zero sets $Z_{\bf s}$ of a generic consistent system ${\bf s}\in R$. Then, in the case of equivariant linear series on spherical homogeneous spaces we provide a strategy for computing discrete invariants of such generic non-empty set $Z_{\bf s}$. For equivariant linear series on the torus $(\C^*)^n$ this strategy provides explicit calculations and generalizes the theory of Newton polyhedra. 
\end{abstract}

\tableofcontents

\section{Introduction}
Let $X$ be an irreducible algebraic variety over $\C$ and let $\cE=(E_1,\ldots,E_k)$ be a collection of base-point free linear series on $X$. That is, $E_i\subset H^0(X, \cL_i)$ is a finite dimensional subspace of the space of regular sections of globally generated line bundles $\cL_i$, such that there are no points $x\in X$ with $s(x)=0$ for any $s\in E_i$. 

A collection of linear series $\cE$ defines systems of equations on $X$ of the form
\begin{equation}\label{eq1}
s_1=\dots=s_k=0,
\end{equation}
where $s_i\in E_i$. A collection $\cE$ is called {\em overdetermined } if system~(\ref{eq1}) does not have any roots on $X$ for the generic choice of ${\bf s}=(s_1,\ldots,s_k)\in \E=E_1\times\ldots\times E_k$. If generic system~(\ref{eq1}) has a solution we will say that $\cE$ is generically consistent. Here, and everywhere in this paper, by saying that some property is satisfied by a generic point of an irreducible algebraic variety $Y$ we mean that there exists a Zariski closed subset $Z\subset Y$ such that for any $y\in Y\setminus Z$ this property is satisfied.

In this paper we study overdetermined  collections of linear series and their generic non-empty zero sets. For a collection $\cE=(E_1,\ldots,E_k)$ we define the consistency variety $R_\cE\subset \E$ to be the closure of the set of all systems $\bf s$ which have at least one common root on $X$. A consistency variety $R_\cE$ is an irreducible variety which is a generalization of the resultant hypersurface. We study $R_\cE$ in Section~\ref{consvar}. 

One of the main goals of this paper is to find discrete invariants of the zero set of a generic consistent system ${\bf s}\in R_\cE$. This goal is motivated by the study of families of complete intersections. A motivating example for us is a system of equations defining the singular locus of an algebraic hypersurface. Let $H$ be a hypersurface defined by a polynomial $f$ on $\C^n$. The singular locus of $H$ is given by the conditions $f=df=0$ which could be viewed as $n+1$ algebraic equations in $n$ variables. Since this system is overdetermined, the generic hypersurface is smooth. However, for a generic 1-parameter family of hypersurfaces $H_t$ defined by polynomials $f_t$ one would see singular hypersurfaces. In this setting, questions about the topology of the singular loci appearing in the generic families are translated to questions about the topology of generic non-empty zero sets of overdetermined  linear systems.

\smallskip
\noindent
{\bf Newton Polyhedra theory and generalisations.} There are lots of results on the zero sets of a system of equations defined by the generic member of a linear series. First such results are provided by the theory of Newton polyhedra which work with equivariant linear series on $(\C^*)^n$. 

The  Newton polyhedron $\Delta(f)\subset \R^n$ of a Laurent polynomial $f=\sum a_i x^{k_i}$ is the convex hull of vectors $k_i$ with $a_i\ne 0$. For a fixed polytope $\Delta$ let $E_\Delta$ be a finite dimensional vector space of Laurent polynomials $f$ such that $\Delta(f)\subset\Delta$. Newton polyhedra theory allows one to find discrete invariants of the zero set $Z_{\bf s}$ of the system (\ref{eq1}) where $s_i$ is a generic member of $E_{\Delta_i}$   in terms of combinatorics of the polytopes $\Delta_1,\ldots, \Delta_k$. An example of such a result is the celebrated Bernstein-Kouchnirenko-Khovanskii Theorem (see \cite{B75}). 
\begin{NoNumberTheorem}[BKK Theorem]
Let $s_1, \ldots, s_n$ be generic Laurent polynomials with $\Delta(s_i)=\Delta_i$. Then all solutions of the system $s_1 = \ldots = s_n = 0$ in $(\C^*)^n$  are non-degenerate and the number solutions is
$$
n! Vol(\Delta_1,\ldots,\Delta_n),
$$
where $Vol$ is the mixed volume.
\end{NoNumberTheorem}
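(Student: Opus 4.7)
The plan is to reduce the counting problem to an intersection computation on a toric compactification of $(\C^*)^n$ and to then invoke the standard identification of equivariant intersection numbers with mixed volumes. First, I would choose a smooth projective toric variety $X_\Sigma$ whose fan $\Sigma$ refines the common refinement of the normal fans of $\Delta_1,\ldots,\Delta_n$. On such $X_\Sigma$, each polytope $\Delta_i$ determines a globally generated line bundle $\cL_{\Delta_i}$ together with a canonical identification $H^0(X_\Sigma,\cL_{\Delta_i})\cong E_{\Delta_i}$, under which the lattice points of $\Delta_i$ form a basis of character sections. The system $s_1=\ldots=s_n=0$ therefore becomes the intersection of divisors $Z_i=\{s_i=0\}$ on $X_\Sigma$, and it suffices to show that for generic $\mathbf{s}$ this intersection is a finite, reduced subset of $(\C^*)^n\subset X_\Sigma$, and then to compute its cardinality.

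The main technical obstacle is the ``no roots at infinity'' step, ensuring that all intersection points land in the open torus. Each torus orbit $O_\tau\subset X_\Sigma$ corresponds to a cone $\tau\in\Sigma$, which cuts out a face $\Delta_i^\tau\subset\Delta_i$ of each polytope; the restriction of $s_i$ to $O_\tau$ is the facial Laurent polynomial $s_i^\tau$ with Newton polytope $\Delta_i^\tau$, living on a torus of dimension $n-\dim\tau<n$. Since there are $n$ equations, each facial subsystem is overdetermined on $O_\tau$, so the set of $\mathbf{s}\in\E$ for which some $Z_i$ meets $O_\tau$ is Zariski closed and proper; taking the union over the finitely many orbits $O_\tau$ with $\tau\neq\{0\}$ and intersecting with the open Bertini locus on the torus produces a dense open subset of $\E$ for which $Z_1\cap\cdots\cap Z_n$ is finite, reduced, and entirely contained in $(\C^*)^n$. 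Verifying that Bertini applies on the open torus uses the fact that $E_{\Delta_i}$ generates $\cL_{\Delta_i}$ there, so the linear system has no base points inside $(\C^*)^n$.

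Once this is established, Bezout on the complete variety $X_\Sigma$ identifies the number of solutions with the intersection number $\cL_{\Delta_1}\cdots\cL_{\Delta_n}$. To finish, I would equate this with $n!\,\mathrm{Vol}(\Delta_1,\ldots,\Delta_n)$ by multilinearity: intersection numbers of nef line bundles are symmetric multilinear on the Picard group, and on toric varieties Minkowski addition of polytopes corresponds to tensor product of the associated line bundles after passage to a common refinement; since mixed volume is by definition the polarization of $\vol$, it suffices to verify the diagonal identity $\cL_\Delta^n=n!\,\vol(\Delta)$, a classical computation on the projective toric variety associated to $\Delta$.
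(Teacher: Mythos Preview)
The paper does not prove this statement: the BKK Theorem is quoted in the introduction as a classical result and attributed to Bernstein \cite{B75}, with no proof given. It serves purely as motivation and background for the paper's own results (Theorems~\ref{toric} and~\ref{BKK}, which extend BKK to overdetermined systems). So there is nothing in the paper to compare your argument against.

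That said, your outline is the standard toric-compactification proof and is essentially correct. One slip worth fixing: in the ``no roots at infinity'' step you write ``the set of $\mathbf{s}\in\E$ for which some $Z_i$ meets $O_\tau$ is Zariski closed and proper.'' This is not what you want---for an orbit $O_\tau$ of positive dimension, each individual divisor $Z_i$ will typically meet $O_\tau$. What you need (and what your overdetermined-facial-system argument actually shows) is that the \emph{common} intersection $Z_1\cap\cdots\cap Z_n$ avoids $O_\tau$ for generic $\mathbf{s}$, because the restricted system $s_1^\tau=\cdots=s_n^\tau=0$ is $n$ equations on a torus of dimension $n-\dim\tau<n$ and is hence generically inconsistent. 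With that correction the argument goes through.
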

For more examples of results of Newton polyhedra theory see \cite{Kh78, Kh88, DKh86}.

Newton polyhedra theory has generalizations to other classes of algebraic varieties such as spherical homogeneous spaces $G/H$ with a collection of $G$-invariant linear systems. The first result in this direction was a generalization of the BKK Theorem and was obtained by Brion and Kazarnovskii in \cite{Bri89, Kaz87}. For more results see for example \cite{Kir06, Kir07, KK16}. The role of the Newton polytope in these results is played by the  Newton-Okounkov polytope, which  is a polytope fibered over the moment polytope with string polytopes as fibers.

Even more generally, in \cite{KK12} and \cite{LM} Newton polyhedra theory was generalized to the theory of Newton-Okounkov bodies. For a  linear series $E$ on an irreducible algebraic variety $X$ one can associate a convex body $\Delta(E)$ called the Newton-Okounkov body in such a way that the number of roots of a generic system
$$
s_1=\ldots=s_n=0
$$
with $s_i\in E$ is equal to $n!Vol(\Delta(E))$.

All these results work for a generic system. That is, as before, in the space of systems $\E=E_1\times\ldots\times E_k$ there exists a Zariski closed subset $D$ such that for any system ${\bf s}\in \E \setminus D$ discrete invariants of the zero set $Z_{\bf s}$ are the same and can be computed combinatorially. In particular, for overdetermined  systems all the answers provided by these results are trivial.

\smallskip
\noindent
{\bf Structure of the paper, formulation of the results and related previous work.} Studying a generic consistent system given by an overdetermined  collection of linear series is a particular case of studying generic among non-generic systems. For generically consistent collections of linear series, the study of generic among non-generic systems is usually hard. Even in the case of the BKK Theorem such results are quite technical and recent (see \cite{Pin}). However, if a collection of linear systems is overdetermined, there are lots of cases where questions about the topology of generic non-empty zero sets could be answered rather explicitly in terms of combinatorics. First results in this direction were already obtained in \cite{GKZ}. It was shown there that for collections of finite sets $A_1,\ldots,A_{n+1}\subset \Z^n$ which generate the lattice, the generic consistent system of Laurent polynomials with supports in $A_i'$s has a unique root. This result was later generalised by different authors for more general undetermined systems of Laurent polynomials (see \cite{esterov2007, esterov2010, D'AS}). These results were further generalized in \cite{Mon17} and described in Section~\ref{torus} of this paper.

In Section~\ref{consvar}, for an overdetermined  collection of linear series, we define the consistency variety $R_\cE\subset \E$ which is the closure of the set of all consistent systems and prove that it is irreducible. Basic geometric properties of $R_\cE$ are studied in Theorems~\ref{codim} and \ref{indep}. In Subsection~\ref{resultant} we consider the case when $codim(R_\cE)=1$ and define the resultant polynomial of a collection $\cE$. The resultant of a collection of linear series is a generalization of the $\cL$-resultant defined in \cite{GKZ}. We prove that all the basic properties of the $\cL$-resultant are also satisfied by the resultant of a collection of linear series. A related version of resultant was also studied in \cite{Bus} in the case when $X$ is projective with an open subset $U$ parametrized by an open subset of $\A^n$. In the case when conditions of \cite{Bus} are satisfied, our resultant is a non-trivial degree of the one defined in \cite{Bus} (see Definition~\ref{resdef} and a footnote after it for details).

Section~\ref{zerogen} is devoted to studying the generic non-empty zero set of an overdetermined  collection of linear systems $\cE$ on an irreducible variety $X$. One of the main results of this section is Theorem~\ref{maingen} which expresses a generic non-empty zero set of system~(\ref{eq1}), defined by $\cE$, as the generic zero set of another linear series which is generically consistent. This allows one to use classical results described in the previous subsection to find the topology of the generic non-empty zero set in a number of examples.

In Section~\ref{hom} we study $G$-equivariant linear series on homogeneous $G$-spaces. Spherical homogeneous spaces are of special interest to us. We apply Theorem~\ref{maingen} to obtain Theorem~\ref{mainsph} which provides a strategy for computing discrete invariants of a generic non-empty zero set of an overdetermined  linear series on a spherical homogeneous space. An example of an application of Theorem~\ref{mainsph} is given in Subsection~\ref{ex}.

In Section~\ref{torus} we study overdetermined  linear systems on the torus $(\C^*)^n$. In this case the strategy provided by Theorem~\ref{mainsph} can be made absolutely explicit. In particular, one can explicitly express invariants of generic non-empty zero sets in terms of combinatorics of Newton polyhedra. Theorem~\ref{toric} is an explicit version of Theorem~\ref{mainsph} in this case. Theorem~\ref{BKK}, which generalizes the BKK-theorem, is an example of how one can use Theorem~\ref{toric} together with results of Newton polyhedra theory to compute discrete invariants of generic non-empty zero sets. The results of Section~\ref{torus} appeared previously in \cite{Mon17} and are included in this paper for completeness of exposition.

\noindent
{\bf Acknowledgments.} The author would like to thank Askold Khovanskii for his enthusiasm and support during the work and the  anonymous referee for their comments which helped to improve this paper.

\section{Consistency variety and resultant of a collection of linear series on a variety}\label{consvar}
In this section we define the consistency variety of a collection of linear series and describe its main properties. 

\subsection{Background} 
Let $X$ be an irreducible complex algebraic variety. Let $\cL_1,\ldots \cL_k$ be globally generated line bundles on $X$. For $i = 1,\ldots, k$, let $E_k \subset H^0(X,\cL_i)$ be finite-dimensional, base-point free linear series. Let $\bf E$ denote the $k$-fold product $E_1 \times \cdots\times E_k$.

\begin{Definition}
The {\it incidence variety} $\widetilde R_{\bf E}\subset X\times \E$ is defined as:
$$
\widetilde R_\E=\{(p,(s_1,\ldots,s_k))\in X\times \E \ |\ f_1(p)=\ldots=f_k(p)=0 \}.
$$
\end{Definition}

Let $\pi_1:X\times \E \to X$, $\pi_2:X\times \E \to  \E$  be natural projections to the first and the second factors of the product. 

\begin{Definition}
The {\it consistency variety} $R_\E\subset \Omega_A$ is the closure of the image of $\widetilde R_L$ under the projection $\pi_2$.
\end{Definition}

\begin{Theorem}
The  incidence variety $\widetilde R_\E\subset X\times L$ and the  consistency variety $R_\E$ are  irreducible algebraic varieties. 
\end{Theorem}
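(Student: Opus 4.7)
The plan is to realise $\widetilde R_\E$ as the total space of a vector bundle over $X$. Since $X$ is irreducible, this will immediately force $\widetilde R_\E$ to be irreducible; the statement for $R_\E$ is then automatic, because $R_\E = \overline{\pi_2(\widetilde R_\E)}$ is the Zariski closure of the image of an irreducible variety under the morphism $\pi_2$, hence irreducible.

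To extract a vector bundle structure I would use the base-point-free hypothesis. Fix $i$ and consider the evaluation map of coherent sheaves
\[
\mathrm{ev}_i \colon E_i \otimes_\C \cO_X \longrightarrow \cL_i, \qquad s \otimes f \mapsto f\cdot s.
\]
Base-point freeness of $E_i$ says exactly that $\mathrm{ev}_i$ is surjective at every point of $X$, so $\mathrm{ev}_i$ is a surjection of locally free sheaves. Its kernel $K_i$ is therefore a locally free sheaf of rank $\dim E_i - 1$, and the fibre of $K_i$ over a point $p \in X$ is precisely the hyperplane $\{s \in E_i : s(p)=0\} \subset E_i$. Taking the direct sum $K := K_1 \oplus \cdots \oplus K_k$ yields a vector bundle on $X$ whose total space is tautologically identified with $\widetilde R_\E \subset X \times \E$: a point over $p$ is a tuple $(s_1, \ldots, s_k)$ with $s_i(p) = 0$ for each $i$, which is the defining incidence condition.

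The total space of any vector bundle over an irreducible base is irreducible — the projection to $X$ is open with irreducible, equidimensional fibres, so any decomposition of the total space into two proper closed subsets would descend to a similar decomposition of $X$. Hence $\widetilde R_\E$ is irreducible. Applying the morphism $\pi_2$ and taking Zariski closure preserves irreducibility, giving the claim for $R_\E$.

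I do not anticipate a serious obstacle here; the only careful step is the passage from the pointwise description of the fibres of $\pi_1|_{\widetilde R_\E}$ to the global vector bundle structure, which is precisely what the surjectivity of $\mathrm{ev}_i$ on $\cO_X$-modules provides. Everything else is formal.
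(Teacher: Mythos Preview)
Your proof is correct and is essentially the same as the paper's: both realise $\pi_1|_{\widetilde R_\E}$ as a vector bundle over $X$ (the paper simply notes that each fibre is cut by $k$ independent linear equations, while you spell this out via the kernel sheaves $K_i$), and then pass to $R_\E$ by taking the closure of the image under $\pi_2$. Your sheaf-theoretic justification of the bundle structure is a bit more explicit than the paper's, but there is no substantive difference.
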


\begin{proof}
Since $E_1,\ldots E_k$ are base-point free, the preimage $\pi_1^{-1}(p)\subset \widetilde R_\E$ of any point $p\in X$ is defined by $k$  independent linear equations on elements of $\E$. Therefore, the projection $\pi_1$ restricted to $\widetilde R_\E$:
$$
\pi_1:\widetilde R_\E \to X
$$
forms a vector bundle of rank $\dim(L) -k$ and in particular is irreducible.  

The set of consistent systems $R_\E=\pi_2(\widetilde R_\E)$ is the image of an irreducible algebraic variety $\widetilde R_\E$ under the algebraic map $\pi_2$, so it is irreducible constructible set. Hence it's closure is an irreducible algebraic variety.
\end{proof}

For two linear systems $E_i$ and $E_j$, let their product $E_iE_j$ be a vector subspace of $H^0(X,\cL_i\otimes\cL_j)$ generated by all the elements of the form $f\otimes g$ with $f\in E_i$ and $g\in E_j$. For any $J\subset \{1,\ldots,k\}$, by $E_J$ we will denote the product $\prod_{j\in J} E_j$.

To a base-point free linear system $E$, one can associate a morphism $\Phi_E:X\to \p(E^*)$ called a {\em Kodaira map}. It is defined as follows: for a point $x\in X$ its image $\Phi_E(x) \in \p(E^*)$ is the hyperplane  $E_x\in E$ consisting of all the sections $g\in E$ which vanish at $x$. We will denote by $Y_E$ the image  of the Kodaira map $\Phi_E$ and by $\tau_E$ the dimension of $Y_E$. For a collection of linear series $E_1,\ldots,E_k$ and $J\subset \{1,\ldots,k\}$ we will write $\Phi_J, Y_J$ and $\tau_J$ for $\Phi_{E_J}, Y_{E_J}$ and $\tau_{E_J}$ respectively.

\begin{Definition}\label{defK}
For a collection of linear series $E_1\ldots,E_k$, the \emph{defect} of a subcollection $J\subset \{1,\ldots,k\}$ is defined as 
$$
\Def(E_J)= \tau_J - |J|.
$$
\end{Definition}

The following theorem of Kaveh and Khovanskii gives a condition on a collection of linear series to be generically consistent in terms of defects.

\begin{Theorem}[\cite{KK16} Theorems 2.14 and 2.19]\label{consistentiff}
The generic system of equations $s_1=\ldots=s_k=0$ with $f_i\in E_i$ is consistent if and only if $\Def(E_J)\geq 0$ for any $J\subset \{1,\ldots,k\}$.
\end{Theorem}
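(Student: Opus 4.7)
The plan is to translate the defect condition into a dimensional condition on projections of the joint image of $X$ and reduce to a Hall-marriage type lemma which I prove by induction on $k$. First I would assemble the joint Kodaira map $\Phi=(\Phi_1,\ldots,\Phi_k)\colon X\to\prod_{i=1}^k\p(E_i^*)$ and let $Y=\overline{\Phi(X)}$. The key identification is $\tau_J=\dim\pi_J(Y)$, where $\pi_J$ denotes the projection onto the factors indexed by $J$: this holds because the Kodaira map $\Phi_J$ attached to the product series $E_J$ factors through the Segre embedding of $\prod_{j\in J}\p(E_j^*)$, which is a closed immersion. Each $s_i\in E_i$ cuts out a hyperplane $H_{s_i}\subset\p(E_i^*)$, so consistency of the system $s_1=\cdots=s_k=0$ is equivalent to $Y\cap\bigcap_{i=1}^k\pi_i^{-1}(H_{s_i})\ne\emptyset$.

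For $(\Rightarrow)$ the argument is a direct dimension count: if $\Def(E_J)<0$ then $\dim\pi_J(Y)<|J|$, so for generic $(s_j)_{j\in J}$ the $|J|$ hyperplane sections cut $\pi_J(Y)$ in the expected negative dimension and leave an empty intersection. Already the subsystem indexed by $J$ is therefore generically inconsistent, and so a fortiori is the full system.

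For $(\Leftarrow)$ I would prove the following lemma by induction on $k$: if $Y$ is an irreducible subvariety of $\prod_{i=1}^k Z_i$ satisfying $\dim\pi_J(Y)\ge|J|$ for all $J\subset\{1,\ldots,k\}$, then for generic hyperplanes $H_i\subset Z_i$ the intersection $Y\cap\bigcap_i\pi_i^{-1}(H_i)$ is nonempty. The base $k=1$ is immediate. For the inductive step, pick a generic $H_k$ and pass to a suitable irreducible component $Y'$ of $Y\cap\pi_k^{-1}(H_k)$ (which has dimension $\dim Y - 1$ because $\tau_{\{k\}}\ge 1$ makes $\pi_k$ non-constant); then verify the Hall conditions for $Y'$ with respect to the first $k-1$ factors by case analysis on $d_J\coloneqq\dim\pi_{J\cup\{k\}}(Y)-\dim\pi_J(Y)$. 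When $d_J\ge 1$, generic $H_k$ meets each fiber of $\pi_{J\cup\{k\}}(Y)\to\pi_J(Y)$ and $\pi_J(Y')$ is dense in $\pi_J(Y)$; when $d_J=0$, the hypothesis $\dim\pi_{J\cup\{k\}}(Y)\ge|J|+1$ forces $\dim\pi_J(Y)\ge|J|+1$, and imposing the $H_k$-condition costs at most one dimension, leaving $\dim\pi_J(Y')\ge|J|$.

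The main obstacle is the degenerate subcase with $d_J=0$ in which the induced rational map $\pi_J(Y)\dashrightarrow Z_k$ has constant image, since a generic $H_k$ would miss the image and render $\pi_J(Y')$ empty. I would dispose of this using the $J=\{k\}$ hypothesis: such constancy on a dense open of $\pi_J(Y)$ forces the image $\pi_{\{k\}}(Y)$ to be finite (by irreducibility of $Y$ and properness of $\pi_k$), contradicting $\tau_{\{k\}}\ge 1$. A secondary subtlety is that $Y\cap\pi_k^{-1}(H_k)$ may be reducible when the pullback linear system is low-dimensional, so one must argue that some component inherits the full collection of Hall conditions simultaneously; this can be handled either by Bertini's irreducibility theorem when $\tau_{\{k\}}\ge 2$ or by directly checking that a dominating component works. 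Once the Hall conditions propagate to $Y'$, the inductive hypothesis produces the desired common zero.
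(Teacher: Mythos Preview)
The paper does not give its own proof of this statement: it is quoted as Theorems~2.14 and~2.19 of \cite{KK16} and used as a black box. In fact the paper's subsequent generalization (Theorem~\ref{codim}) relies on it through Proposition~\ref{solv}, so the paper's internal machinery does not furnish an independent argument. Your proposal therefore cannot be compared to a proof in the paper, only assessed on its own.

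Your outline is sound and the strategy is a standard and natural one. The identification $\tau_J=\dim\pi_J(Y)$ is exactly what the commutative diagram in the proof of Corollary~\ref{intersectsum} gives, the $(\Rightarrow)$ direction follows from the incidence-variety dimension count you indicate, and the inductive Hall-type lemma for $(\Leftarrow)$ works. In the case $d_J\ge 1$ your claim that $\pi_J(Y')$ is dense in $\pi_J(Y)$ is justified because the generic fiber $W_y\subset\p(E_k^*)$ is positive-dimensional and any hyperplane meets it; in the case $d_J=0$ the bound $\dim\pi_J(Y')\ge|J|$ follows since $W=\pi_{J\cup\{k\}}(Y)\to\pi_J(Y)$ is generically finite, $W\cap(*\times H_k)$ has pure codimension one in $W$, and for generic $H_k$ some component of it lies over the locus where the fibers are finite.

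The one place that is genuinely underexplained is the $\tau_{\{k\}}=1$ branch of your ``secondary subtlety''. Saying that ``a dominating component works'' is not yet an argument: different components of $Y\cap\pi_k^{-1}(H_k)$ could a priori witness the Hall bound for different $J$'s. One clean fix is to take the Stein factorization $Y\to\widetilde C\to\overline{\pi_k(Y)}$; over a generic point $\tilde p\in\widetilde C$ the fiber $Y_{\tilde p}$ is irreducible of dimension $\dim Y-1$, and since $\pi_J(Y_p)=\bigcup_{\tilde p\mapsto p}\pi_J(Y_{\tilde p})$ for generic $p$, the bound $\dim\pi_J(Y_p)\ge|J|$ forces $\dim\pi_J(Y_{\tilde p})\ge|J|$ for each such $\tilde p$. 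Alternatively, reorder so that the induction step is always taken on an index with $\tau_{\{k\}}\ge 2$ (where your Bertini argument applies), and treat the residual case in which every $\tau_{\{i\}}=1$ directly: then $\pi_{\{1,\ldots,k\}}(Y)$ is forced to equal the full product of the image curves and generic hyperplanes meet. With either patch, your proof goes through.
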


In other words, Theorem~\ref{consistentiff} states that the codimension of the consistency variety is equal to 0 in $\E$ if and only if $\Def(E_J)\geq 0$ for any $J\subset \{1,\ldots,k\}$. In Subsection~\ref{prop} we will generalize this result by finding the codimension of $R_E$ in terms of defects of subcollections.

\subsection{The defect of vector subspaces and essential subcollections}  In this subsection we will introduce a combinatorial version of the defect and relate it to the one given in Definition~\ref{defK}. Let $k$ be any field and $\cV= (V_1,\ldots, V_k)$ be a collection of vector subspaces of a vector space $W\cong k^n$ (in this paper we will always work with $k=\C$ or $\R$). 
For $J \subset \{1,\ldots,k\}$ let $V_J$ be the sum $\sum_{j \in J}V_j$ and $\pi_J:W \to W/V_J$ be the natural 
projection. 

\begin{Definition}\label{defc} For a collection of vector subspaces $\cV=(V_1,\ldots, V_k)$ of $W$ we define\\
 $i)$ the \emph{defect} of a  subcollection $J\subset \{1,\ldots,k\}$ by $\Def(J)=\dim(V_J) -  |J|$;\\
 $ii)$ the \emph{minimal defect} $\dd(\cV)$ of a collection $V$ to be the minimal defect of $J\subset \{1,\ldots,k\}$. \\
 $iii)$ an \emph{essential subcollection} to be a subcollection $J$ so that $\Def(J)=d(\cV)$ and $\Def(I)>\Def(J)$ for any proper subset $I$ of $J$.
\end{Definition}

The essential subcollections have proved to be useful in studying systems of equations which are generically inconsistent and their resultants. The above definition is related to the definition of an essential subcollection given in \cite{St94}: the two definitions coincide if $\dd(\cV)=-1$, but are different in general. The following theorem provides combinatorial tools to work with collection of vector subspaces. 

\begin{Theorem}[\cite{Mon17} Section 3]\label{unique}
Let $\cV=(V_1,\ldots, V_k)$ be a collection of vector subspaces of  $W$ with $\dd(\cV)\leq0$, then:\\
i) an essential subcollection  exists and is unique.\\
ii) if $J$ is the unique essential subcollection of $\cV$ and $J^c$ isv the complement subcollection, then $\dd(\pi_J(J^c))=0$.\\ 
iii) if $J$ is the essential subcollection there exists a subcollection $I\subset J$ of size $dim(V_J)$ with $\dd(I)=0$.
\end{Theorem}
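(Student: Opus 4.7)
The plan is to use submodularity of the defect function as the main technical engine. I would first check that $\Def$ is submodular on subsets of $\{1,\ldots,k\}$: for any $I,J$ one has $V_{I \cup J} = V_I + V_J$ and $V_{I \cap J} \subseteq V_I \cap V_J$, so combined with $|I \cup J| + |I \cap J| = |I| + |J|$ and the standard dimension formula $\dim(V_I + V_J) + \dim(V_I \cap V_J) = \dim V_I + \dim V_J$, one obtains
\[
\Def(I \cup J) + \Def(I \cap J) \leq \Def(I) + \Def(J).
\]

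For (i), existence follows by a finite descent: start with any $J$ realizing $\dd(\cV)$ and iteratively replace it by any proper subset of equal defect; the process terminates because $|J|$ strictly decreases. For uniqueness, given two essential subcollections $J_1, J_2$, submodularity together with the minimality of $\dd(\cV)$ forces $\Def(J_1 \cap J_2) = \Def(J_1 \cup J_2) = \dd(\cV)$; if $J_1 \neq J_2$ then (without loss of generality) $J_1 \cap J_2 \subsetneq J_1$ realizes $\Def(J_1)$ on a proper subset of $J_1$, contradicting essentiality. The borderline case $\dd(\cV) = 0$ is handled directly: $\Def(\emptyset) = 0$ forces the unique essential subcollection to be $\emptyset$.

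For (ii), a short computation gives, for any $K \subseteq J^c$,
\[
\Def(\pi_J(K)) = \dim V_{K \cup J} - \dim V_J - |K| = \Def(K \cup J) - \Def(J),
\]
using $\pi_J(V_K) = (V_K + V_J)/V_J$ and $K \cap J = \emptyset$. Minimality of $\Def(J)$ yields $\Def(\pi_J(K)) \geq 0$ for all $K$, while $K = \emptyset$ gives equality, so $\dd(\pi_J(J^c)) = 0$.

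For (iii), which is the most delicate step, I would construct $I$ greedily. Note that $\Def(J) \leq 0$ implies $|J| \geq d := \dim V_J$. Starting from $I_0 = \emptyset$, at step $t+1$ pick any $j_{t+1} \in J \setminus I_t$ with $V_{j_{t+1}} \not\subseteq V_{I_t}$; such a choice exists whenever $t < d$ because $V_J = \sum_{j \in J} V_j$. After $d$ steps, $I := I_d$ has size $d$ and $V_I = V_J$. To verify $\dd(I) \geq 0$, for any $K = \{j_{s_1},\ldots,j_{s_m}\} \subseteq I$ with $s_1 < \cdots < s_m$, induction on $|K|$ gives $\dim V_K \geq |K|$: the subset $K \setminus \{j_{s_m}\}$ is contained in $I_{s_m - 1}$, so $V_{j_{s_m}} \not\subseteq V_{K \setminus \{j_{s_m}\}}$ by construction, forcing $\dim V_K \geq \dim V_{K \setminus \{j_{s_m}\}} + 1$. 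The main subtlety I would watch for is that the greedy rule only directly controls dimensions along the constructed chain $I_0 \subset I_1 \subset \cdots$; the inductive step above is precisely what is needed to promote this to the defect condition on \emph{all} subsets $K \subseteq I$.
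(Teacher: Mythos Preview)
The paper does not supply its own proof of this statement; it is quoted from \cite{Mon17}, Section~3. So there is no in-paper argument to compare against, and I assess your proposal on its own.

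Your arguments for parts (i) and (ii) are correct and essentially the standard ones. Submodularity of $\Def$ is the right engine; the uniqueness step via $J_1\cap J_2$ is clean (and your separate handling of $\dd(\cV)=0$ with the empty subcollection is necessary and matches the paper's remark after the theorem). The identity $\Def(\pi_J(K))=\Def(K\cup J)-\Def(J)$ is exactly what reduces (ii) to the minimality of $\Def(J)$.

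Part (iii), however, has a genuine gap. The assertion ``such a choice exists whenever $t<d$'' is not justified by $V_J=\sum_{j\in J}V_j$: that reasoning only gives a valid $j_{t+1}$ when $V_{I_t}\subsetneq V_J$, and your greedy rule $V_{j_{t+1}}\not\subseteq V_{I_t}$ guarantees $\dim V_{I_t}\geq t$, not $\dim V_{I_t}=t$. Concretely, take $W=\R^2$ with $V_1=\R^2$, $V_2=\span(e_1)$, $V_3=\span(e_2)$, $V_4=\span(e_1)$. One checks that $J=\{1,2,3,4\}$ is the essential subcollection, with $\dd(\cV)=\Def(J)=-2$ and $d=\dim V_J=2$. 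If the greedy picks $j_1=1$ at the first step, then $V_{I_1}=V_J$ already and no $j_2\in J\setminus I_1$ satisfies $V_{j_2}\not\subseteq V_{I_1}$: the algorithm stalls with $|I_1|=1<d$. Your inductive verification that $\dd(I)\geq 0$ is perfectly fine \emph{once} a chain of length $d$ has been produced; the missing ingredient is producing it.

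One clean repair is Rado's theorem. The inequality $\Def(K)\geq\Def(J)$ for all $K\subseteq J$ rewrites as $|J\setminus K|+\dim V_K\geq d$, which is precisely the Hall-type condition guaranteeing linearly independent vectors $v_j\in V_j$ for $j$ ranging over some $I\subseteq J$ of size $d$; then $\dim V_K\geq \dim\,\span(v_j:j\in K)=|K|$ for every $K\subseteq I$, giving $\dd(I)=0$. Alternatively one can argue that \emph{some} greedy order succeeds (in the example above, starting with $j_1=2$ works), but that claim itself requires proof and is essentially equivalent to the Rado argument.
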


Parts $i)$ and $ii)$ of Theorem~\ref{unique} are still true in the case $\dd(\cV)>0$, the unique essential subcollection in this case is the empty subcollection. A subcollection $I$ from part $iii)$ of Theorem~\ref{unique} is almost never unique.

To relate the combinatorial version of defect to the geometric version defined in Definition~\ref{defK} we  introduce a collection of distributions (and codistributions) on $X$ related to a collection of linear systems $E_1,\ldots,E_k$. Let  $X$ and $E_1,\ldots,E_k$ be as before, denote further  by $X^{sing}$ the singular locus of $X$, by $Y_J^{sing}$ the singular locus of $Y_J=\Phi_J(X)$.

Let also $\Sigma_J^c\subset X \setminus \left(X^{sing}\cup \Phi_J^{-1}(Y_J^{sing}) \right)$ be the set of all critical points of $\Phi_{J}$ viewed as a map to $Y_J\setminus Y_J^{sing}$. Finally, let $B_J=X^{sing}\cup\Phi_J^{-1}(Y_J^{sing})\cup\Sigma_J^c$ and let $U\subset X$ be a Zariski open subset defined by:
$$
U=X \,\big\backslash \bigcup_{J\subset\{1,\ldots,k\}}B_J.
$$
So we get that $U$ is a smooth algebraic variety and for any $J\subset \{1,\ldots,k\}$ the restriction $\Phi_J: U\to (Y_J\setminus Y_J^{sing})$ is a submersion to a smooth locus of $Y_J$. In particular, the rank of the differential $\ddd \Phi_J$ of $\Phi_J$ is constant on $U$ and is equal to $\tau_J$ for any $J$.

In the case of equivariant linear series on homogeneous space $G\/H$ (in particular, in the classical case of linear series on $(\C^*)^n$) subvarieties $Y_J^{sing}$ and $\Sigma_J^c$ are empty. In general, this is not true: 
\begin{itemize}
    \item For an example of $Y_J^{sing}$ being non empty consider $X=\A^1$ with a coordinate $x$ and a linear series $E$ on $X$ spanned by regular functions $1,x^2,x^3$. Then the image of $X$ under the Kodaira map:
    $$
    \Phi: X \to \p(E^*)\simeq \p^2=[z_0:z_1:z_2]
    $$
    is contained in the affine chart $z_0\ne 0$ and is defined in it by the unique equation $z_1^3-z_2^2=0.$ In particular, $Y^{sing}=[1:0:0]$ and is non-empty.
    \item For an example where $\Sigma_J^c\ne \varnothing$, let $X=\A^2$ with coordinates $x,y$, and $E= span(1,x^2+y^2)$. Then the Kodaira map $\Phi:X\to \p(E^*)=[z_0:z_1]$ is given by:
    $$
    \Phi(x,y) = [1:x^2+y^2].
    $$
    The image $\Phi(X)$ is an affine chart $z_0\ne 0$ and is in particular smooth. Nevertheless, the differential $\ddd\Phi = 2x\ddd x+2y\ddd y$ vanishes at the point $(0,0)$ and has rank 1 everywhere else on $X$. So the critical locus $\Sigma^c \subset  X \setminus \left(X^{sing}\cup \Phi^{-1}(Y^{sing}) \right)$ in this case is a point $(0,0)\in X$.
\end{itemize}

\begin{Definition} 
Let $a \in U$ and $\widetilde{F}_J(a)$ be the subspace of the tangent space $T_aU$ defined by the linear equations $dg_a = 0$ for all $g \in E_J$. Let also $\widetilde F_J^\vee(a) \subset T^*_aU$ be the annihilator of $\widetilde{F}_J(a)$. Then\\
(1) $\widetilde{F}_J$ is an $(n - \tau_J)$-dimensional distribution on the Zariski open set $U\subset X$ defined by the collection of subspaces $\widetilde{F}_J(a)$.\\
(2)  $\widetilde F_J^\vee$ is a $\tau_J$-dimensional codistribution on the Zariski open set $U\subset X$ defined by the collection of subspaces $\widetilde{F}_J^\vee(a)$.
\end{Definition}

\begin{Lemma} \label{integrable}
The distribution $\widetilde{F}_J$ and codistribution $\widetilde{F}^\vee_J$ on $U$ are well defined, i.e. $\dim(\widetilde{F}_J(a))=n-\tau_J$ and $\dim(\widetilde{F}_J(a))=\tau_J$ for any $a\in U$. Moreover, $\widetilde{F}_J$ is completely integrable, its leaves are  connected components of the fibers the Kodaira map $\Phi_J: U \to Y_J$.
\end{Lemma}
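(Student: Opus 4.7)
The plan is to identify $\widetilde{F}_J$ with the kernel of the differential of the Kodaira map $\Phi_J$ on $U$ and then deduce everything from the construction of $U$ together with the submersion theorem.

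First, I would make the condition ``$dg_a=0$ for all $g\in E_J$'' precise via a local trivialization of $\cL_J$. Since $E_J$ is base-point free at $a\in U$, there is $g_0\in E_J$ with $g_0(a)\neq 0$; trivializing $\cL_J$ by $g_0$ near $a$ converts each $g\in E_J$ into a regular function $\tilde g=g/g_0$, and the defining condition becomes $d\tilde g_a=0$ for all $g\in E_J$. Choosing a basis $g_0,g_1,\ldots,g_N$ of $E_J$, in the affine chart of $\p(E_J^*)$ dual to $g_0\neq 0$ the Kodaira map $\Phi_J$ is $a\mapsto(\tilde g_1(a),\ldots,\tilde g_N(a))$; since $d\tilde g_{0,a}=d(1)_a=0$ automatically, the condition $d\tilde g_a=0$ for all $g\in E_J$ cuts out exactly $\ker(\ddd\Phi_{J,a})$. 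Passing to a different trivializing section $g_0'$ gives the same subspace of $T_aU$: applied to $g=g_0$, the condition $d(g_0/g_0')_a(v)=0$ kills the ``change of trivialization'' term, after which the remaining conditions coincide with those coming from $g_0$. Thus $\widetilde{F}_J(a)=\ker(\ddd\Phi_{J,a})$ intrinsically.

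Second, the dimension statements follow at once from the construction of $U$. By definition $U$ avoids $X^{sing}$, $\Phi_J^{-1}(Y_J^{sing})$ and the critical locus $\Sigma_J^c$, so the restriction $\Phi_J\colon U\to Y_J\setminus Y_J^{sing}$ is a submersion onto a smooth variety of dimension $\tau_J$. Consequently $\ddd\Phi_{J,a}$ has constant rank $\tau_J$ on $U$, which yields $\dim\widetilde{F}_J(a)=n-\tau_J$ and $\dim\widetilde{F}_J^\vee(a)=\tau_J$ for every $a\in U$. This shows that both $\widetilde{F}_J$ and $\widetilde{F}_J^\vee$ are well-defined distributions and codistributions.

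Finally, complete integrability is automatic for the kernel of a submersion: the fibers $\Phi_J^{-1}(y)\cap U$ for $y\in Y_J\setminus Y_J^{sing}$ are smooth submanifolds of $U$ of dimension $n-\tau_J$ whose tangent space at any point $a$ coincides with $\ker(\ddd\Phi_{J,a})=\widetilde{F}_J(a)$, so their connected components are precisely the integral leaves of $\widetilde{F}_J$. The main technical point, and really the only nontrivial step, is the trivialization-independence verification in the first paragraph; once the identification $\widetilde{F}_J=\ker\ddd\Phi_J$ on $U$ is in hand, everything else is an immediate consequence of the way $U$ was defined and standard facts about submersions.
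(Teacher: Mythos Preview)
Your proof is correct and follows essentially the same approach as the paper: both identify $\widetilde{F}_J(a)$ with $\ker(\ddd\Phi_{J,a})$, use the constant rank of $\ddd\Phi_J$ on $U$ to get the dimensions, and invoke the submersion/implicit function theorem for integrability. Your treatment is actually a bit more careful than the paper's in one respect: you explicitly address what ``$dg_a=0$'' means for a section of a line bundle via a local trivialization and verify trivialization-independence, whereas the paper simply writes $\Phi_J(x)=[s_1(x):\ldots:s_r(x)]$ and asserts that $\widetilde{F}_J(a)$ is the kernel of its differential without spelling out this step.
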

\begin{proof}
For the first part of the lemma it is enough to show that $\dim(\widetilde{F}_J(a))=n-\tau_J$ for any $a\in U$. Let $s_1,\ldots,s_r \in E_J$ be a basis, then in coordinates the Kodaira map is given by:
$$
\Phi_J(x) = [s_1(x):\ldots:s_r(x)].
$$
Therefore, the vector space $\widetilde{F}_J(a)$ is the kernel of differential $\dd(\Phi_J)$ of the Kodaira map. Since $\dd(\Phi_J)$ is of constant rank $\tau_J$ on $U$, 
$$
\dim(\widetilde{F}_J(a))= \dim\left( \ker \dd(\Phi_J)\big|_a\right) = n-\tau_J.
$$

The second part of the lemma is an immediate corollary the Implicit Function Theorem. Indeed, by Implicit Function Theorem the the fibers of $\Phi_J: U \to Y_J$ are smooth subvarieties of $X$ of dimension $n-\tau_J$, tangent to $\ker(\dd(\Phi_J))=\widetilde{F}_J$ on $U$. 
\end{proof}

Fibers of the Kodaira maps $\Phi_J$ can be described in terms of systems of equations defined by collection of linear series $E_1,\ldots, E_k$.
\begin{Lemma}[\cite{KK16} Lemma 2.11]\label{fiber}
 For $a,b\in X$ we have $\Phi_J(a)=\Phi_J(b)$ if and only if for every $i\in J$ the sets $\{g_i\in E_i| g_i(a)=0\}$ and  $\{g_i\in E_i| g_i(b)=0\}$ coincide.
\end{Lemma}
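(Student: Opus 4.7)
The plan is to translate the equality $\Phi_J(a) = \Phi_J(b)$ into the equality of evaluation kernels $(E_J)_a = (E_J)_b$ inside $E_J$, where $(E_J)_x := \{g \in E_J : g(x) = 0\}$, and then to compare this hyperplane in the product linear series with the collection of hyperplanes $(E_i)_a, (E_i)_b \subset E_i$ one factor at a time.

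For the ``only if'' direction I would argue by contraposition. If some $i \in J$ and some $g_i \in E_i$ satisfy $g_i(a) = 0$ but $g_i(b) \neq 0$, then, exploiting base-point freeness of each remaining $E_j$ for $j \in J \setminus \{i\}$, I can pick $g_j \in E_j$ with $g_j(b) \neq 0$ for every such $j$. The decomposable tensor $g = \bigotimes_{j \in J} g_j \in E_J$ then vanishes at $a$ but not at $b$, so $(E_J)_a \neq (E_J)_b$ and hence $\Phi_J(a) \neq \Phi_J(b)$.

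For the ``if'' direction, assume $(E_i)_a = (E_i)_b$ for every $i \in J$. For each $i$, base-point freeness makes the evaluation maps $\operatorname{ev}_a^i, \operatorname{ev}_b^i \colon E_i \to \C$ (after trivializing $\cL_i$ at $a$ and $b$) surjective with common kernel, hence proportional by some $c_i \in \C^*$. I would then observe that every decomposable element $g = \bigotimes_i g_i \in E_J$ satisfies $g(b) = \bigl(\prod_{i \in J} c_i\bigr)\, g(a)$ in the induced trivialization of $\bigotimes_i \cL_i$. Since the same global scalar $\prod_{i \in J} c_i$ governs every decomposable generator of $E_J$, linearity propagates this proportionality to the entire span, and the two kernels must coincide.

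The main obstacle is the ``if'' direction, and more precisely the need to verify that the per-factor scaling constants assemble into a single uniform scalar valid on all decomposable generators of $E_J$. This is exactly the place where the multiplicative structure of the product linear series enters the argument essentially; once uniform scaling of decomposables is secured, the passage to the full span of $E_J$ is purely formal from linearity.
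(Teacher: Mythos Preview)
The paper does not supply its own proof of this lemma: it is quoted verbatim from \cite{KK16} (Lemma~2.11 there) and used as a black box in the proof of Corollary~\ref{intersectsum}. So there is no in-paper argument to compare against.

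Your argument is correct as it stands. A few remarks:

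\begin{itemize}
\item In the ``only if'' direction your contraposition is clean: since decomposable tensors lie in $E_J$ by definition of the product linear series, exhibiting a single decomposable $g=\bigotimes_{j\in J} g_j$ with $g(a)=0\neq g(b)$ already separates the two hyperplanes $(E_J)_a$ and $(E_J)_b$.
\item In the ``if'' direction, the point you flag as the ``main obstacle'' is in fact already dispatched by your own argument. The constants $c_i$ depend only on $i$ (and on the chosen trivializations of $(\cL_i)_a$ and $(\cL_i)_b$), not on the particular section $g_i$; hence the product $\prod_{i\in J} c_i$ is literally the same scalar for every decomposable generator. There is no further verification needed beyond what you wrote.
\item One cosmetic point: the trivializations of $(\cL_i)_a$ and $(\cL_i)_b$ are independent choices, so the individual $c_i$ are not canonical; but the \emph{statement} $(E_J)_a=(E_J)_b$ is about kernels of the evaluation maps and is insensitive to rescaling either side, so this causes no trouble.
\end{itemize}

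In short: the proof is complete, and the ``obstacle'' paragraph could be dropped or rephrased as a one-line observation rather than a concern.
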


\begin{Corollary}\label{intersectsum}
Let $U\subset X$ be as before, then for any $a\in U$ one has:
$$
F_{E_J}(a)=\bigcap_{i\in J}F_i(a), \quad F_{E_J}^\vee(a)=\sum_{i\in J}F_i^\vee(a).
$$
\end{Corollary}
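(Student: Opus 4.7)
The plan is to prove the first equality by factoring the Kodaira map $\Phi_J$ through the product of individual Kodaira maps and a Segre embedding, and then to deduce the second equality from the first by taking annihilators in the cotangent space.

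First I set up the factorization. The multiplication map
$$
\mu : \bigotimes_{i \in J} E_i \twoheadrightarrow E_J, \qquad \bigotimes_{i\in J} g_i \;\mapsto\; \prod_{i \in J} g_i,
$$
is surjective by the very definition of $E_J$. Dualizing gives an inclusion $E_J^{*} \hookrightarrow \bigotimes_{i\in J} E_i^{*}$, which induces a closed embedding $\iota : \p(E_J^{*}) \hookrightarrow \p(\bigotimes E_i^{*})$. Let $\sigma : \prod_{i \in J} \p(E_i^{*}) \hookrightarrow \p(\bigotimes E_i^{*})$ be the Segre embedding. Unwinding the definition of the Kodaira map as an evaluation functional (after fixing local trivializations of the $\cL_i$ near $a$), both $\iota \circ \Phi_J(x)$ and $\sigma \circ \bigl(\Phi_i(x)\bigr)_{i \in J}$ are represented by the functional $\bigotimes_{i} g_i \mapsto \prod_{i\in J} g_i(x)$ on $\bigotimes E_i$, so the diagram commutes: $\iota \circ \Phi_J = \sigma \circ \prod_{i \in J} \Phi_i$.

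Since $\iota$ and $\sigma$ are closed embeddings, their differentials are injective everywhere. Hence at $a \in U$,
$$
F_{E_J}(a) \;=\; \ker d\Phi_J|_a \;=\; \ker d\Bigl(\textstyle\prod_{i \in J} \Phi_i\Bigr)\Big|_a \;=\; \bigcap_{i \in J} \ker d\Phi_i|_a \;=\; \bigcap_{i \in J} F_i(a),
$$
giving the first equality. For the second, one passes to annihilators in $T_a^{*}U$: since $F_{E_J}^{\vee}(a)$ and $F_i^{\vee}(a)$ are by definition the annihilators of $F_{E_J}(a)$ and $F_i(a)$, and $\bigl(\bigcap V_i\bigr)^{\perp} = \sum V_i^{\perp}$ in any finite-dimensional vector space, one concludes
$$
F_{E_J}^{\vee}(a) \;=\; \Bigl(\bigcap_{i \in J} F_i(a)\Bigr)^{\perp} \;=\; \sum_{i \in J} F_i^{\vee}(a).
$$

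The only real subtlety is the commutativity of the factorization diagram, which reduces to a routine unwinding of evaluation functionals using surjectivity of $\mu$; no transversality argument is needed. One could alternatively start from Lemma~\ref{fiber} to get the set-theoretic identity $\Phi_J^{-1}(\Phi_J(a)) = \bigcap_{i \in J} \Phi_i^{-1}(\Phi_i(a))$ and then pass to tangent spaces at the smooth point $a \in U$, but that would require a separate regularity/dimension check, which the Segre factorization conveniently bypasses.
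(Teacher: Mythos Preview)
Your argument is correct and matches the paper's own proof essentially step for step: both factor $\Phi_J$ through the Segre embedding and the inclusion $\p(E_J^*)\hookrightarrow \p(\bigotimes E_i^*)$ dual to the multiplication map, use injectivity of the differentials of these embeddings to identify $\ker d\Phi_J|_a$ with $\bigcap_i \ker d\Phi_i|_a$, and then dualize. The only cosmetic difference is that the paper cites Lemma~\ref{fiber} to justify commutativity of the diagram, whereas you verify it directly by unwinding evaluation functionals; your version is arguably cleaner on that point.
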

\begin{proof}

Indeed, by Lemma~\ref{fiber} the following diagram is commutative

\begin{center}
   \begin{tikzcd}
\prod_{i\in J} \p(E_i^*)  \arrow[rr, hook, "s"] & &
\p\left(\displaystyle\bigotimes_{i\in J} E_i^*\right)& & 
\p(E_J^*) \arrow[ll,hook', "i"']\\
& && & \\
& &X\arrow[lluu, "\prod_{i\in J} \Phi_i"]  \arrow[rruu, " \Phi_J"']& &
\end{tikzcd}
\end{center}
where $s$ is a Segre embedding and $i$ is the projectivisation of a dual map to the natural projection:
$$
\bigotimes_{i\in J} E_i \twoheadrightarrow E_J.
$$
By the proof of Lemma~\ref{integrable}, $\widetilde F_{E_J}(a)$ is the kernel of the differential of the Kodaira map $\ddd\Phi_J$ restricted to $T_aU$. But since $i\circ\Phi_J =s\circ\prod_{i\in J} \Phi_i$ (and both $s$ and $i$ are embeddings) we have:
$$
\ker \dd(\Phi_J)\big|_a = \ker \ddd\left.\left(\prod_{i\in J} \Phi_i\right)\right|_a =  \displaystyle \bigcap_{i\in J}\ker \dd(\Phi_i)\big|_a,
$$
and hence $F_{E_J}(a)=\displaystyle\bigcap_{i\in J}F_i(a)$. The second statements follows immediately by dualization. 
\end{proof}

The following Proposition~\ref{defisdef} relates two definitions of the defect and is the main result of this subsection. Proposition~\ref{defisdef} will allow us to apply combinatorial results of Theorem~\ref{unique}  to the geometric version of defect.

\begin{Proposition}\label{defisdef}
Let $U\subset X$ be as before, then the defect $\Def(E_J)$  of linear system $E_J$ (as in Definition~\ref{defK}) is equal to the defect of a collection of vector subspaces $(F_{E_i}^\vee(a))_{i\in J}$ of $T^*U_a$ (as in Definition~\ref{defc}) for any $a\in U$.
\end{Proposition}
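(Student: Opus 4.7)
The plan is to unwind the two definitions and observe that the statement reduces to combining Corollary~\ref{intersectsum} with Lemma~\ref{integrable}.

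First, I would write out the combinatorial defect of the collection $(F_{E_i}^\vee(a))_{i\in J}$ in $T^*_aU$: applied to the full index set $J$, part $(i)$ of Definition~\ref{defc} gives
\[
\Def\bigl((F_{E_i}^\vee(a))_{i\in J}\bigr)=\dim\!\Bigl(\sum_{i\in J}F_{E_i}^\vee(a)\Bigr)-|J|.
\]
The geometric defect is $\Def(E_J)=\tau_J-|J|$ by Definition~\ref{defK}. So the proposition amounts to the equality $\dim\bigl(\sum_{i\in J}F_{E_i}^\vee(a)\bigr)=\tau_J$.

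Next, I would invoke Corollary~\ref{intersectsum} to identify $\sum_{i\in J}F_{E_i}^\vee(a)$ with $F_{E_J}^\vee(a)$, and then apply Lemma~\ref{integrable} (applied to the single linear series $E_J$) to conclude that this codistribution has dimension exactly $\tau_J$ at every point of $U$. Combining the two identifications yields $\dim\bigl(\sum_{i\in J}F_{E_i}^\vee(a)\bigr)=\dim F_{E_J}^\vee(a)=\tau_J$, which is what we wanted.

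There is no real obstacle here; the substance of the argument is entirely contained in Corollary~\ref{intersectsum} (which in turn relies on the Segre/Kodaira diagram and Lemma~\ref{fiber}) and in the rank computation of Lemma~\ref{integrable}. The only point worth flagging explicitly is that the open set $U$ is chosen precisely so that $\ddd\Phi_J$ has constant rank $\tau_J$ for every $J\subset\{1,\ldots,k\}$ simultaneously, which is exactly what makes both the dimension of $F_{E_J}^\vee(a)$ and the identity of Corollary~\ref{intersectsum} hold uniformly on $U$.
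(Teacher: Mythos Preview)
Your proof is correct and essentially identical to the paper's own argument: both reduce the statement to the equality $\dim\bigl(\sum_{i\in J}F_{E_i}^\vee(a)\bigr)=\tau_J$, obtain it by combining Corollary~\ref{intersectsum} with the rank statement of Lemma~\ref{integrable}, and then read off the equality of defects. Your write-up is slightly more explicit about why the choice of $U$ matters, but the logical content is the same.
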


\begin{proof}
By construction, $F_{E_J}^\vee$ is $\tau_J$-dimensional codistribution on $U$, so $\dim(F_{E_J}^\vee(a))=\tau_J$ for any $a\in U$. Therefore, by Corollary~\ref{intersectsum} we have

\begin{equation*}
\begin{split}
\Def(E_J)= \tau_J -|J| = \dim(F_{E_J}^\vee(a)) -|J|= \dim(F_{E_J}^\vee(a)) -|J|\\
=\dim\left(\sum_{i\in J}F_i^\vee(a)\right)-|J|=\Def (F_{E_i}^\vee(a))_{i\in J}. \qedhere
\end{split}
\end{equation*}
\end{proof}

\subsection{Properties of the consistency variety}\label{prop} In this subsection we will investigate basic properties of the consistency variety. One of the main results of this section is the following theorem which computes the codimension of the consistency variety in terms of defects. 

\begin{Theorem}\label{codim}
Let $\cE=(E_1,\ldots E_k)$ be a collection of base-point free linear systems on a quasi projective irreducible variety $X$. Then the codimension of the consistency variety $R_\E$ is equal to $-\dd(\cE)$ where $\dd(\cE)$ is the minimal possible defect $\Def(E_J)$ for $J\subset \{1,\ldots,k\}$. 
\end{Theorem}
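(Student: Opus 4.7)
The plan is to compute $\dim R_\cE$ through the incidence variety $\widetilde R_\cE$. The proof of irreducibility already shows that $\pi_1:\widetilde R_\cE\to X$ is a vector bundle of rank $\dim\E-k$, so $\dim\widetilde R_\cE = n + \dim\E - k$, where $n=\dim X$. Since $R_\cE$ is the closure of $\pi_2(\widetilde R_\cE)$, determining $\dim R_\cE$ reduces to computing the dimension $d$ of the generic fiber of $\pi_2|_{\widetilde R_\cE}$, which equals $\dim Z_{\bf s}$ for a generic ${\bf s}\in R_\cE$. The theorem then reduces to showing $d = n - k - \dd(\cE)$. The case $\dd(\cE)>0$ is covered by Theorem~\ref{consistentiff} (then $R_\cE=\E$ and $\codim R_\cE = 0$), so I will assume $\dd(\cE)\leq 0$.

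Let $J\subset\{1,\ldots,k\}$ be the essential subcollection provided by Theorem~\ref{unique}(i), so that $\Def(E_J)=\tau_J-|J|=\dd(\cE)$. The heart of the argument is to factor the equations indexed by $J$ through the Kodaira map $\Phi_J:X\to Y_J\subset\p(E_J^*)$: sections $s_i\in E_i$ with $i\in J$ correspond to hyperplanes $H_i\subset\p(E_J^*)$, and
$$
\{s_i=0:\, i\in J\} = \Phi_J^{-1}(Y_J\cap H_1\cap\cdots\cap H_{|J|}).
$$
Since $|J|>\tau_J$, this intersection on $Y_J$ is overdetermined. I would then show that the locus of tuples $(s_i)_{i\in J}$ whose corresponding hyperplanes meet $Y_J$ in a common point has codimension exactly $|J|-\tau_J=-\dd(\cE)$ in $\prod_{i\in J}E_i$, and that the common point $y_0\in Y_J$ is generically unique. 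Consequently, for a generic consistent ${\bf s}$ one gets $Z_{\bf s}\subset \Phi_J^{-1}(y_0)$, and this fiber has dimension $n-\tau_J$ by Lemma~\ref{integrable}.

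It will remain to cut $\Phi_J^{-1}(y_0)$ by the remaining equations $\{s_i=0:i\notin J\}$. By Theorem~\ref{unique}(ii) the projected collection $\pi_J(J^c)$ has minimal defect zero, and through Proposition~\ref{defisdef} (applied on the open set $U$ of Lemma~\ref{integrable}) this translates to the restrictions of the linear systems $E_i$, $i\notin J$, to the generic fiber of $\Phi_J$ forming a generically consistent family of defect zero. These $k-|J|$ equations will then cut the fiber in a subvariety of codimension $k-|J|$, giving
$$
d = (n-\tau_J)-(k-|J|) = n-k-\dd(\cE),
$$
so that $\codim R_\cE = \dim\E - \dim R_\cE = k - n + d = -\dd(\cE)$.

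The main obstacle will be rigorous justification of the two genericity claims sketched above. First, that the common intersection point $y_0\in Y_J$ is generically unique: this should follow from the minimality clause in the definition of an essential subcollection, because a positive-dimensional generic excess intersection would allow one to drop an equation and realize the same defect with a proper subcollection, contradicting essentiality of $J$. Second, that the equations for $i\notin J$ cut the fiber $\Phi_J^{-1}(y_0)$ in exactly the expected codimension; this will require transferring the combinatorial defect-zero statement of Theorem~\ref{unique}(ii) into a geometric assertion about the restricted Kodaira maps on a generic fiber, and then invoking Theorem~\ref{consistentiff} fiberwise while staying inside the open set $U$ where all the relevant distributions are controlled.
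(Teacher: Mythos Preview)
Your approach is structurally different from the paper's, and the obstacles you flag are real gaps rather than routine checks.

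The paper does not compute the generic fibre dimension $d$ of $\pi_2$ at all. After the same reduction to an injective collection, it uses Proposition~\ref{solv} to reduce to the case where the collection $\cE$ is itself essential (i.e.\ $J=\{1,\ldots,k\}$). It then invokes part~(iii) of Theorem~\ref{unique}: inside the essential collection there is a subcollection $I$ of size $\tau_J$ with $\dd(I)=0$. The $|I|=\tau_J$ equations from $I$ are generically consistent on $Y_I$ (same dimension as $|I|$) and cut out finitely many points; the remaining $k-|I|=-\dd(\cE)$ equations then impose that many independent linear conditions by base-point freeness. This yields $\codim R_\cE=-\dd(\cE)$ directly, without ever identifying $Z_{\bf s}$.

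Your route instead tries to compute $d=\dim Z_{\bf s}$ for generic ${\bf s}\in R_\cE$, which amounts to proving a weak form of Proposition~\ref{esszero} and Theorem~\ref{maingen} \emph{before} Theorem~\ref{codim}. The gap is exactly where you placed it: the claim that the generic consistent intersection on $Y_J$ is a single point, or even zero-dimensional. Your heuristic---that a positive-dimensional excess would let one drop an equation and realise the same defect---does not work as stated: essentiality only forces $\Def(J\setminus\{j\})=\Def(J)+1$, equivalently $\tau_{J\setminus\{j\}}=\tau_J$, which says nothing about the dimension of the \emph{consistent} locus on $Y_J$. What actually forces zero-dimensionality is again Theorem~\ref{unique}(iii): the zero-defect subcollection $I\subset J$ already cuts $Y_I$ (hence $Y_J$, since $\tau_I=\tau_J$) down to finitely many points generically, so the full $J$-intersection is a fortiori finite. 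But once you invoke Theorem~\ref{unique}(iii) you have exactly the paper's ingredient in hand, and its direct count of excess conditions is shorter than the fibre-dimension detour. Note also that for your dimension formula you only need zero-dimensionality, not uniqueness; the uniqueness statement is the full content of Proposition~\ref{esszero} and needs the additional work of Lemma~\ref{essdisjoint}.
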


We say that a collection of linear series $\cE=(E_1,\ldots,E_k)$ on $X$ is {\em injective} if linear series from $\cE$ separate points of $X$. In other words, $\cE$ is  injective if the product of Kodaira maps $\prod_{i=1}^k \Phi_{E_i}$ is injective on $X$. Note that by Lemma~\ref{fiber} the product of Kodaira maps $\prod_{i=1}^k \Phi_{E_i}$ is injective   if and only if  the Kodaira map $\Phi_E$ for $E=\prod_{i=1}^k E_i$ is such. Therefore, equivalently $\cE$ is injective if  the Kodaira map $\Phi_E$ is injective.

Any collection of linear series $\cE$ on $X$ could be reduced to an injective collection $\widetilde \cE $ such that zero sets of $\cE$ and $\widetilde \cE$ are related in an easy way.  In order to do so, let us describe the zero set $Z_{\bf s}$ of a system of equations $s_1=\ldots=s_k=0$ with $s_i\in E_i$ in terms of Kodaira maps $\Phi_{E_i}$.

For the product of projective spaces $\p_{\E}=\p(E_1^*)\times\ldots\times \p(E_k^*)$ let $p_i:\p_{\E}\to \p(E_i^*)$ be the natural projection on the $i$-th factor. Each function $s_i\in E_i$ defines a hyperplane $H_{s_i}$ on $\p(E_i^*)$, with slight abuse of notation let us denote its preimage under $p_i$ by the same letter. Let $\Phi_\E: X\to \p_\E$ be the product of Kodaira maps and $Y_\E = \Phi_\E(X)$ be its image. In this notation the zero set $Z_{\bf s}$ is given by 
$$
Z_{\bf s}=\Phi_\E^{-1}\left(Y_\E \cap \bigcap_{i=1}^{k}H_{s_i}\right).
$$ 

Therefore for any collection of linear series $\cE=(E_1,\ldots,E_k)$ on $X$ one can associate an injective collection $\widetilde \cE$ on $Y_\E$, where   $\widetilde \cE$  is the restriction of $E_1,\ldots,E_k$ to  $Y_\E$ such that
$$
Z_{\bf s} = \Phi_\E^{-1}(Z_{\bf \widetilde{s}}).
$$

\begin{Proposition}\label{solv}
Let $X$ be an irreducible algebraic variety over $\C$ and $\cE=(E_1,\ldots,E_k)$ be an overdetermined  collection of linear systems on $X$, let $J$  be the essential subcollection of $\cE$. Let also $X_a$ be a fiber of $\Phi_J$ passing through a point $a\in X$. Then for generic $a\in X$, the restriction of the collection $\cE_{J^c}=(E_i)_{i\notin J}$ on $X_a$ is generically consistent.
\end{Proposition}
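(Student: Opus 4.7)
\emph{Plan.} By Theorem~\ref{consistentiff} applied on $X_a$, it is enough to show that for every $I\subset J^c$ the defect of $(E_i|_{X_a})_{i\in I}$ is non-negative. I would compute these restricted defects geometrically via Proposition~\ref{defisdef}, after identifying $T^*_a X_a$ with a quotient of $T^*_a U$, and then use the essentiality of $J$ to supply the required inequality.

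For generic $a$ we may assume $a\in U$, where $U\subset X$ is the open subset defined before Lemma~\ref{integrable}. By Lemma~\ref{integrable} the leaf $X_a$ through $a$ is a smooth irreducible component of $\Phi_J^{-1}(\Phi_J(a))$ with $T_a X_a=\widetilde F_J(a)$, whence, using Corollary~\ref{intersectsum},
\[
T^*_a X_a \;=\; T^*_a U\,\big/\,\textstyle\sum_{j\in J} F_j^\vee(a).
\]
Since the Kodaira map of $E_i|_{X_a}$ factors as $\Phi_{E_i}\circ (X_a\hookrightarrow X)$, the chain rule identifies the codistribution of $E_i|_{X_a}$ at $a$ with the image of $F_i^\vee(a)$ in this quotient; base-point freeness is inherited from $\cE$.

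For $I\subset J^c$, Corollary~\ref{intersectsum} gives $\sum_{i\in I}F_i^\vee(a)+F_J^\vee(a) = F_{I\cup J}^\vee(a)$, so the image of $\sum_{i\in I}F_i^\vee(a)$ in $T^*_a X_a$ has dimension $\tau_{I\cup J}-\tau_J$. By Proposition~\ref{defisdef} applied on $X_a$, the defect of $(E_i|_{X_a})_{i\in I}$ therefore equals $\tau_{I\cup J}-\tau_J-|I|$. Since $I\cap J=\varnothing$ and $J$ is essential, $\Def(E_{I\cup J})\geq\Def(E_J)=\dd(\cE)$, i.e.\ $\tau_{I\cup J}-|I|-|J|\geq\tau_J-|J|$, which rearranges to the required inequality $\tau_{I\cup J}-\tau_J-|I|\geq 0$.

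The principal technical point is the identification of codistributions under restriction: one must verify that the generic $a$ lies in the full-rank smooth locus on $X_a$ where Proposition~\ref{defisdef} applies intrinsically, and that the restriction maps $E_i\to H^0(X_a,\cL_i|_{X_a})$ preserve base-point freeness. Once this is in place the combinatorial bookkeeping coincides with Theorem~\ref{unique}(ii), which is precisely the statement $\dd(\pi_J(J^c))=0$ and can be invoked in place of the explicit defect calculation above.
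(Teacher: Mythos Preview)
Your proof is correct and follows essentially the same route as the paper's own argument: reduce to Theorem~\ref{consistentiff}, identify the codistributions of the restricted linear series with the images $\pi_J(F_i^\vee(a))$ in $T^*_aX_a\cong T^*_aU/F_J^\vee(a)$, and then check nonnegativity of all defects. The only cosmetic difference is that the paper invokes Theorem~\ref{unique}(ii) directly to conclude $\dd(\pi_J(J^c))=0$, whereas you unpack this into the explicit inequality $\tau_{I\cup J}-\tau_J-|I|\geq 0$ (deduced from minimality of $\Def(E_J)$) and only afterwards remark that this is Theorem~\ref{unique}(ii); both are the same computation.
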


\begin{proof}
Here one can take any $a\in U$, with $U=X\setminus \bigcup_{I\subset \{1,\ldots,k\}} B_I$ as before. By Theorem~\ref{consistentiff} it is enough to show that the minimal defect of the restriction of collection of linear series $\cE_{J^c}$ on $X_a$ is nonnegative. Let $J^c = \{i_1,\ldots,i_s\}$. By Proposition~\ref{defisdef} the minimal defect of the collection of linear series $\cE_{J^c}$, can be computed as the minimal defect of a collection of vector subspaces 
$$
\widetilde F_{i_1}^\vee(a) ,\ldots, \widetilde F_{i_s}^\vee(a) \subset T^*_aX_a,
$$
where $\widetilde F_{i_j}^\vee$ is the codistribution of the restriction of $E_{i_j}$ to $X_a$. The codistributions $\widetilde F_{i_1}^\vee ,\ldots, \widetilde F_{i_s}^\vee$ are given by $\pi_J(F^\vee_{i_1}),\ldots,\pi_J(F^\vee_{i_1})$, where 
$$
\pi_J:T^*U\to T^*(X_a\cap U) \cong T^*U/F^\vee_J,
$$
is the natural projection. By the second part of Theorem~\ref{unique} we know that $d(\pi_J(F^\vee_{i_1}),\ldots,\pi_J(F^\vee_{i_1}))=0$, so the theorem is proved.
\end{proof}

\begin{proof}[Proof of Theorem~\ref{codim}]
Without loss of generality we can assume that $\cE$ is injective. Indeed,  $\widetilde \cE$ is consistent in codimension $r$ if and only if $\cE$ is consistent in codimension $r$. 

By Proposition~\ref{solv} we can assume that the essential subcollection of $E_1,\ldots,E_k$ is the collection itself. We will call such collections of linear series {\em essential}. For an essential collection, by Theorem~\ref{unique} there exists a consistent subcollection of size $\tau_\E$, assume it is equal to $I= \{1,\ldots,r\}$. Then the dimension $\tau_\E$ is equal to the dimension $\tau_I$. Therefore the generic solution linear conditions from $I$ on $\Phi_I(X)$ is a union of finitely many points. 

Since linear systems $E_i$'s are base-point free, the condition on the sections from $E_{r+1},\ldots,E_k$ to vanish at any of these points is union of $k-r=-\Def(\E)$ clearly independent linear conditions, which finishes the proof of the theorem in this case. 
\end{proof}

We finish this section with another corollary of Proposition~\ref{solv} which reduces the study of resultant subvarieties to the study of resultant subvarieties of essential collections of linear systems.

\begin{Theorem}\label{indep}
Let $X$ be a complex irreducible quasi-projective algebraic variety and $\cE=(E_1,\ldots,E_k)$ be overdetermined  collection of linear systems on $X$ with the essential subcollection  $J$. Then the consistency variety $R_\cE$ does not depend on $E_i$ with $i\notin J$. In other words:
$$
R_\cE=p^{-1}(R_{\cE_J}), \text{ where } p:\E\to \E_J =\prod_{i\in J}E_i
$$
is the natural projection.
\end{Theorem}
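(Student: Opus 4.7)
The plan is to establish the two inclusions of the claimed equality. The inclusion $R_\cE \subseteq p^{-1}(R_{\cE_J})$ is immediate: any ${\bf s}$ in the image $\pi_2(\widetilde R_\cE)$ has some common root $x \in X$, so $p({\bf s})$ vanishes at $x$ and therefore lies in $R_{\cE_J}$. Because $p^{-1}(R_{\cE_J})$ is closed in $\E$ and $R_\cE$ is defined as the closure of $\pi_2(\widetilde R_\cE)$, this containment passes to closures.

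For the reverse inclusion I would use a dimension count. The key observation is that, because $J$ is the essential subcollection of $\cE$, the defect $\Def(E_I)$ is strictly greater than $\dd(\cE)$ for every proper $I \subsetneq J$ and equals $\dd(\cE)$ when $I = J$. When $\cE_J$ is viewed as a collection in its own right, its subcollections are precisely the subsets of $J$, so these two conditions translate into $\dd(\cE_J) = \Def(E_J) = \dd(\cE)$, with $J$ itself being essential for $\cE_J$ as well. Applying Theorem~\ref{codim} to $\cE$ and to $\cE_J$ therefore gives
$$
\operatorname{codim}_{\E} R_\cE \;=\; -\dd(\cE) \;=\; -\dd(\cE_J) \;=\; \operatorname{codim}_{\E_J} R_{\cE_J}.
$$
Since $p \colon \E \to \E_J$ is a linear surjection with kernel $\prod_{i \notin J} E_i$, we also have $\operatorname{codim}_{\E} p^{-1}(R_{\cE_J}) = \operatorname{codim}_{\E_J} R_{\cE_J}$, so $R_\cE$ and $p^{-1}(R_{\cE_J})$ have the same codimension in $\E$.

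The final step is irreducibility. The opening theorem of Section~\ref{consvar} applies to $\cE_J$ (each $E_j$ inherits base-point freeness from $\cE$), giving that $R_{\cE_J}$ is irreducible; its preimage $p^{-1}(R_{\cE_J}) \cong R_{\cE_J} \times \prod_{i \notin J} E_i$ is then irreducible as well. The inclusion $R_\cE \subseteq p^{-1}(R_{\cE_J})$ of irreducible closed subvarieties of $\E$ of the same dimension forces equality. The only subtle point I anticipate is verifying that $J$ remains essential for $\cE_J$ in isolation, but this is automatic from Definition~\ref{defc}, which phrases essentiality purely in terms of defects of subsets of $J$; once this is clear, the codimension formula of Theorem~\ref{codim} transfers directly and the rest of the argument is formal.
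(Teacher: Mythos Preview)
Your argument is correct and takes a genuinely different route from the paper. The paper invokes Proposition~\ref{solv} directly: it argues that over a dense open $W \subset R_{\cE_J}$, the restricted collection $\cE_{J^c}$ is generically consistent on the zero set of the partial system, and hence $p^{-1}(W)$ lies (up to closure) inside $R_\cE$; irreducibility of $R_\cE$ then forces equality. Your approach instead packages this geometric content into the codimension formula of Theorem~\ref{codim} (whose proof already absorbed Proposition~\ref{solv}), verifies $\dd(\cE_J)=\dd(\cE)$ from the definition of essentiality, and finishes with the purely formal observation that an inclusion of equidimensional irreducible closed sets is an equality.

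What your route buys is cleanliness: once Theorem~\ref{codim} is in hand, nothing further about fibers of Kodaira maps or restrictions of linear series needs to be said. What the paper's route buys is that it makes the mechanism visible---one sees explicitly that the remaining equations can be solved over generic points of $R_{\cE_J}$---which is exactly the statement fed forward into Theorem~\ref{maingen}. Your check that $J$ remains the essential subcollection of $\cE_J$ is correct and, as you note, immediate from Definition~\ref{defc}.
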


\begin{proof}
By Proposition~\ref{solv} there exists a Zariski open subset $W$ of $R_{\cE_J}$ such that for any ${\bf s} \in W$ the collection of linear systems $J^c$ restricted to a zero set of ${\bf s}$ is generically consistent. Therefore if $V\subset R_\cE$ is a set of consistent systems $\bf s$ such that $p({\bf s})\in U$ one has $\codim(V)=\codim(R_\cE)=-d(\cE)$. Since $R_\E$ is an irreducible variety it coincides with the closure of $V$,  which finishes the proof.
\end{proof}

\subsection{Resultant of a collection of linear series on a variety}\label{resultant} In this subsection we will define resultant of a collection of linear series and translate results of previous subsection to the language of resultants.  The notion of resultant defined here is a generalization of $\cL$-resultant defined in \cite{GKZ}, and most of the results are analogous to the results on $\cL$-resultants in \cite{GKZ}.
 
Let $\cE=(E_1,\ldots,E_{n+1})$ be a collection of linear systems on an irreducible variety $X$ of dimension $n$. Assume also, that the codimension of the consistency variety $R_\cE$ is equal $1$.
\begin{Lemma}
Let $X, \cE$ be as before, then there exists an Zariski open subset $U\subset R_\cE$ so that for any ${\bf s} = (s_1,\ldots,s_{n+1})\in U$, the zero set $Z_{\bf s}$ of the system $s_1=\ldots=s_{n+1}=0$ on $X$ is finite. Moreover one can choose $U$ such that the cardinality of $Z_{\bf s}$ is the same for any ${\bf s} \in U$.
\end{Lemma}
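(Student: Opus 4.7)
The plan is to transfer the statement about fibers of $\pi_2$ to a standard fact about generically finite dominant morphisms, using the incidence variety $\widetilde R_\cE$ as the main technical tool. Observe that by construction the fiber of $\pi_2\colon \widetilde R_\cE \to \E$ over a point ${\bf s}\in \E$ is canonically identified with the zero set $Z_{\bf s}\subset X$ via $(p,{\bf s})\mapsto p$. So it suffices to prove that $\pi_2$ has generically finite fibers of constant cardinality over a Zariski open subset of $R_\cE$.

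First I would compute dimensions. By the proof of the earlier theorem asserting irreducibility of $\widetilde R_\cE$, the projection $\pi_1\colon \widetilde R_\cE\to X$ is a vector bundle of rank $\dim(\E)-(n+1)$, so
\[
\dim \widetilde R_\cE \;=\; n + \dim(\E) - (n+1) \;=\; \dim(\E) - 1.
\]
On the other hand, the hypothesis $\codim(R_\cE)=1$ gives $\dim R_\cE = \dim(\E)-1$. Since $R_\cE$ is by definition the closure of $\pi_2(\widetilde R_\cE)$, the restriction
\[
\pi_2\colon \widetilde R_\cE \longrightarrow R_\cE
\]
is a dominant morphism of irreducible varieties of the same dimension, hence generically finite.

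Next I would invoke generic finiteness: there exists a nonempty Zariski open $U_0\subset R_\cE$ such that for every ${\bf s}\in U_0$ the fiber $\pi_2^{-1}({\bf s})$ is finite. Via the identification above, this is exactly the statement that $Z_{\bf s}$ is finite for ${\bf s}\in U_0$. Finally, to upgrade finiteness to constant cardinality, I would apply the standard fact that a dominant generically finite morphism of irreducible varieties is a finite étale covering over a smaller Zariski open subset $U\subset U_0$ (generic flatness plus generic smoothness in characteristic zero, or equivalently the theorem on constancy of the degree on a Zariski open locus). Over such a $U$ the cardinality of $\pi_2^{-1}({\bf s})$ equals the degree of $\pi_2$ and is therefore constant.

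There is no real obstacle here; the only subtlety is making sure the fibers of $\pi_2$ are genuinely identified with $Z_{\bf s}$ as sets (not just as schemes), which is immediate from the definition of $\widetilde R_\cE$. The dimension count and the standard generic étale statement are completely routine, so the proof is essentially two lines once the incidence variety is brought in.
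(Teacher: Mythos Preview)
Your proof is correct and follows essentially the same approach as the paper's: both use the incidence variety $\widetilde R_\cE$, identify the fiber $\pi_2^{-1}({\bf s})$ with $Z_{\bf s}$, do the same dimension count to conclude $\dim\widetilde R_\cE=\dim R_\cE$, and deduce generic finiteness with constant cardinality. Your write-up is simply a more detailed spelling-out of what the paper records as ``easy dimension counting'' and the standard generically-finite/\'etale fact.
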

 \begin{proof}
Let $\pi_1, \pi_2$ be restrictions of two natural projections from $X\times\E$ to $X,\E$ respectively to a incidence variety $\widetilde R_\cE$. For a system ${\bf s}\in R_\cE$ the zero set $Z_{\bf s}$ is given by $\pi_1(\pi_2^{-1}({\bf s}))$, in particular if $\pi_2^{-1}({\bf s})$ is finite of cardinality $k$ such is $Z_{\bf s}$. Easy dimension counting shows that $\dim \widetilde R_\cE =\dim R_\cE$, so for the generic ${\bf s}\in R_\cE$ the preimage $\pi_2^{-1}({\bf s})$ is finite, and of the fixed cardinality.
 \end{proof}
 
Note that the conditions that $\codim R_\cE = 1$ and generic non-empty zero set is finite forces the number of linear series to be $n+1$.
 
\begin{Definition}\label{resdef}
Let $X, \cE$ be as before, then the resultant $Res_\cE$ is a polynomial which defines the hypersurface $R_\cE$ with multiplicity equal to the number of points in the generic non-empty zero set $Z_{\bf s}$ \footnote{In some places (\cite{Bus, St94}) the resultant is defined as unique up to constant irreducible polynomial defining $R_\cE$, but the definition provided here seems more natural. See  \cite{esterov2007, esterov2010, D'AS} for details.}. Since $R_E$ is irreducible such polynomial is well  defined up to multiplicative constant. For $(s_1,\ldots,s_{n+1}) \in \E$ by $Res_\cE(s_1,\ldots,s_{n+1})$  we will denote the value of resultant on the tuple $s_1,\ldots,s_{n+1}$.
\end{Definition}

The next theorem is an immediate corollary of Theorems~\ref{codim} and~\ref{indep}. This theorem was proved by Sturmfels  in \cite{St94}  for equivariant linear series on an algebraic torus. In that setting resultant of a collection of linear series on a variety is usually called {\em sparse resultant}.
 
\begin{Theorem}
The consistency variety $R_\cE$ of a collection of linear systems $\cE=(E_1,\ldots,E_{n+1})$ has codimension 1 if and only if  $\dd(E_1,\ldots,E_{n+1})=-1$. Moreover, if $J$ is essential subcollection of $\cE$, then the resultant $Res_\cE$ depends only on equations from $E_i$ with $i\in J$.
\end{Theorem}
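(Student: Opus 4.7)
\smallskip
\noindent
\textbf{Proof plan.} The statement has two parts, and both reduce to results already established in this section, so the plan is short.

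\smallskip
\noindent
\textbf{Part 1 (codimension criterion).} The first assertion is immediate from Theorem~\ref{codim}. That theorem gives $\codim R_\cE = -\dd(\cE)$ in $\E$. Hence $\codim R_\cE = 1$ if and only if $\dd(\cE) = -1$. There is nothing to prove beyond citing Theorem~\ref{codim}.

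\smallskip
\noindent
\textbf{Part 2 (resultant depends only on the essential variables).} First I would observe that the assumption $\dd(\cE)=-1<0$ means the generic system is inconsistent, so $\cE$ is overdetermined and Theorem~\ref{indep} applies. Let $J\subset\{1,\ldots,n+1\}$ be the (unique, by Theorem~\ref{unique}) essential subcollection, and let $p\colon\E\to\E_J$ be the natural projection. Theorem~\ref{indep} gives
\[
R_\cE = p^{-1}(R_{\cE_J}).
\]
Since $R_\cE$ is a hypersurface in $\E$ by Part 1, $R_{\cE_J}$ must itself be a hypersurface in $\E_J$, defined by some irreducible polynomial $Q$ in the coordinates of $\E_J$ only. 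Then $p^*Q$ is an irreducible polynomial in the coordinates of $\E$ that involves only the coordinates coming from $E_i$ with $i\in J$, and it cuts out $R_\cE$ set-theoretically.

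\smallskip
\noindent
\textbf{Part 3 (matching multiplicities).} By Definition~\ref{resdef} the resultant $Res_\cE$ is the polynomial vanishing on the irreducible hypersurface $R_\cE$ with multiplicity equal to the cardinality of the generic non-empty zero set $Z_{\bf s}$. Since $R_\cE$ is irreducible and cut out set-theoretically by $p^*Q$, we have $Res_\cE = c\cdot (p^*Q)^m$ for some constant $c$ and some integer $m\geq 1$. Each such power still depends only on the coordinates of $\E$ coming from $E_i$ with $i\in J$, which is the desired conclusion.

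\smallskip
\noindent
\textbf{Main obstacle.} There is essentially no obstacle once Theorems~\ref{codim} and \ref{indep} are in place; the only point requiring a moment's care is verifying that the hypothesis of Theorem~\ref{indep} holds (that $\cE$ is overdetermined) and that the transition from ``$R_\cE$ is the zero locus of $p^*Q$'' to ``$Res_\cE$ is a pure power of $p^*Q$'' is justified by the irreducibility of $R_\cE$ combined with Definition~\ref{resdef}.
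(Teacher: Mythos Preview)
Your proposal is correct and takes essentially the same approach as the paper: the paper states explicitly that this theorem is an immediate corollary of Theorems~\ref{codim} and~\ref{indep} and gives no further argument. You have simply spelled out the details of that deduction (and added the observation about multiplicities via Definition~\ref{resdef}), which is entirely in line with the paper's intent.
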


For a collection $E_1,\ldots,E_n$ of linear series on an irreducible variety $X$ the number of roots of a system $s_1=\ldots=s_n=0$ is constant for the generic $s_i\in E_i$. The generic number of roots of a system $s_1=\ldots=s_n=0$ is called {\it the intersection index} of $E_1,\ldots,E_n$ and is denoted by $[E_1,\ldots,E_n]$. 

\begin{Theorem}
The resultant $Res_\cE$ is a quasihomogeneous polynomial with degree in the $i$-th entry equal to the intersection index $[E_1,\ldots,\hat E_i,\ldots,E_{n+1}]$. In particular, if $J$ is essential subcollection of $E$ and $i\notin J$ the degree in the $i$-th entry is 0.
\end{Theorem}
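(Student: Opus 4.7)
My plan is to split on whether $i$ lies in the essential subcollection $J$, then compute the degree in the main case by restricting $Res_\cE$ to a slice. If $i\notin J$, Theorem~\ref{indep} gives $Res_\cE$ as a pullback from $\E_J$, so $\deg_i Res_\cE=0$; simultaneously $\cE_{\hat i}$ still contains $J$ and is therefore overdetermined with negative defect, so by Theorem~\ref{codim} its generic system is inconsistent, giving $[E_1,\ldots,\hat E_i,\ldots,E_{n+1}]=0$. Both sides vanish, so this case is settled.

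For $i\in J$, I would fix a generic $\bar{\bf s}\in\prod_{j\neq i}E_j$. Then $Z_{\bar{\bf s}}$ consists of $N_i:=[E_1,\ldots,\hat E_i,\ldots,E_{n+1}]$ distinct points, and the polynomial $P(s_i):=Res_\cE(\bar{\bf s},s_i)$ on $E_i$ has vanishing locus $\bigcup_{x\in Z_{\bar{\bf s}}}H_x$, where $H_x:=\{s_i\in E_i: s_i(x)=0\}$ is a hyperplane by base-point freeness. Lemma~\ref{fiber} tells us $H_x=H_y$ precisely when $\Phi_{E_i}(x)=\Phi_{E_i}(y)$, so the $N_i$ points partition via fibers of $\Phi_{E_i}$ into $d$ distinct hyperplanes. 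The crucial claim is that each such fiber contains exactly $N:=|Z_{\bf s}|$ points of $Z_{\bar{\bf s}}$ (with $N$ the generic size of the zero set of a consistent system), so that $N_i=Nd$. To prove this, pick any $x\in Z_{\bar{\bf s}}$ and a generic $s_i\in H_x$ avoiding every $H_y$ coming from a distinct fiber; then $(\bar{\bf s},s_i)$ is a generic point of $R_\cE$, so its zero set has size $N$, and by construction this zero set is precisely the intersection of $Z_{\bar{\bf s}}$ with the $\Phi_{E_i}$-fiber through $x$.

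To finish, write $Res_\cE=f^N$ for the irreducible polynomial $f$ defining $R_\cE$. Since $\pi_2:\widetilde R_\cE\to R_\cE$ is \'etale of degree $N$ over a dense open subset, $R_\cE$ is smooth there; and since $i\in J$ forces $R_\cE$ to depend nontrivially on the $E_i$-coordinates, the tangent space to $R_\cE$ at a generic smooth point cannot contain the entire $E_i$-direction. Hence for generic $\bar{\bf s}$ the slice $\{\bar{\bf s}\}\times E_i$ meets $R_\cE$ transversally at generic points of each of the $d$ distinct hyperplanes, so $f|_{\mathrm{slice}}$ has simple zeros along each of them, giving $\deg_i f=d$ and therefore $\deg_i Res_\cE=Nd=N_i$. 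Quasi-homogeneity is automatic: scaling any $s_i$ by a nonzero scalar preserves $R_\cE$, so any defining polynomial must be homogeneous in each block $E_i$. The main obstacle is the fiber-size claim, which requires verifying carefully that the chosen pair $(\bar{\bf s},s_i)$ really does lie in the dense open subset of $R_\cE$ where $|Z_{\bf s}|=N$; this relies on the irreducibility of $R_\cE$ together with the dominance of the projection $R_\cE\to\E_{\hat i}$.
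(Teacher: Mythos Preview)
Your argument is correct, and it shares with the paper the same underlying idea: restrict $Res_\cE$ in the $E_i$-direction over a fixed generic $\bar{\bf s}\in\prod_{j\ne i}E_j$ and relate the resulting degree to the number of points of $Z_{\bar{\bf s}}$. The execution, however, is rather different.

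The paper does not split on whether $i\in J$; it argues uniformly. It restricts not to the whole slice $\{\bar{\bf s}\}\times E_i$ but to a generic \emph{pencil}: for fixed generic $s_1,\ldots,s_i,s_i',\ldots,s_{n+1}$ one looks at
\[
P(\lambda)=Res_\cE(s_1,\ldots,s_i+\lambda s_i',\ldots,s_{n+1})
\]
as a polynomial in $\lambda$, observes that its degree is $\deg_i Res_\cE$ by homogeneity, and asserts that its number of roots with multiplicity equals $|Z_{\bar{\bf s}}|=[E_1,\ldots,\hat E_i,\ldots,E_{n+1}]$. That last equality is stated as ``easy to see'' and is not argued further.

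Your approach is more explicit about exactly the point the paper elides. Restricting to the full slice, you identify the zero set of $Res_\cE|_{\{\bar{\bf s}\}\times E_i}$ as the union of $d$ hyperplanes indexed by the $\Phi_{E_i}$-images of $Z_{\bar{\bf s}}$, and you prove the key multiplicity statement --- that each such fiber contains exactly $N$ points of $Z_{\bar{\bf s}}$ --- by producing a generic point of $R_\cE$ inside a chosen $H_x$. This is precisely what is needed to turn the paper's ``easy to see'' into an actual argument: in the pencil picture, your claim is what guarantees that each distinct root $\lambda_x$ of $P$ corresponds to $N$ points of $Z_{\bar{\bf s}}$ and has multiplicity $N$ in $P=f^N|_\ell$, so that the total count matches. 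Your separate treatment of $i\notin J$ via Theorem~\ref{indep} and Theorem~\ref{codim} is also cleaner than leaving it implicit. The genericity verification you flag as the ``main obstacle'' does go through: since $\pi_2^{-1}(U)$ is open dense in the irreducible $\widetilde R_\cE$ and the projection $\widetilde R_\cE\to\widetilde R_{\cE_{\hat i}}$ is an affine bundle, its image is open dense, and a further finite projection to $\E_{\hat i}$ shows that for generic $\bar{\bf s}$ every $x\in Z_{\bar{\bf s}}$ has $H_x$ meeting $\pi_2^{-1}(U)$.
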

  
\begin{proof}
The resultant $Res_\cE$ is a homogeneous polynomial in each group of variables, since a system $s_1=\ldots=s_n=0$ has a root on $X$, if and only if  a system $\lambda_1 s_1=\ldots=\lambda_n s_n=0$ has a root for any $\lambda_i \in \C^*$. To find the degree of $Res_\cE$ in the $i$-th entry consider 
$$
Res_\cE(s_1,\ldots, s_i+\lambda s_i', \ldots, s_{n+1})
$$
as a polynomial of $\lambda$ for the fixed generic choice of $s_1,\ldots, s_i, s_i', \ldots, s_{n+1}$. It is easy to see that the number of roots of $Res_\cE(s_1,\ldots, s_i+\lambda s_i', \ldots, s_{n+1})$ counting with multiplicities is equal to the number of common roots of $s_1=\ldots=\hat s_i=\ldots =s_{n+1}=0$, so the degree of $Res_\cE$ in the $i$-th entry is $[E_1,\ldots,\hat E_i,\ldots,E_{n+1}]$.
\end{proof}


 \section{Generic non-empty zero set and reduction theorem}\label{zerogen}
 In this section we first study generic non-empty zero sets. In particular, we show in Theorem~\ref{maingen} that a generic non-empty zero set given by an overdetermined  collection of linear series can be also defined as a generic zero set of generically consistent collection. Then we define a notion of equivalence of two collections of linear systems. Informally speaking, two collections of linear systems are equivalent if they have the same generic nonempty zero sets. We show that every generically inconsistent collection of linear series is equivalent to a collection of minimal defect~$-1$.  
 
\subsection{Zero sets of essential collection of linear systems} First, we will study {\em essential} collections of linear series i.e. collections $\cE=(E_1,\ldots, E_k)$ such that $\Def (J)>\dd(\cE)$ for any $J\varsubsetneq \{1,\ldots,k\}$.

Let $Y\subset \p_\E=\p(E_{1}^*)\times\ldots\times\p(E_k^*)$ be an irreducible variety of dimension $d$. For a subset $J\subset \{1,\ldots,k\}$ denote by  $\p_J$ the product $\prod_{i\in J}\p(E_i)$ and by $\pi_J$ the natural projection $\p({\bf E^*}) \to \p_J$. With slight abuse of notation let us denote the restriction of this projection on  $Y$ also by $\pi_J$ and by $Y_J$ the image of the restricted map. Assume also, that $\cE=(E_1,\ldots, E_k)$ is an essential collection  of linear systems on $Y$ with a consistent subcollection $J$ of size $d$ which exists by Theorem~\ref{unique} and Proposition~\ref{defisdef}.

\begin{Lemma}\label{essdisjoint}
In the situation above for the generic pair of points $x_1,x_2\in Y_J$ the sets $F_{x_i}=\pi_{J^c}(\pi_J^{-1}(x_i))$, for $i\in \{1,2\}$ are disjoint.
\end{Lemma}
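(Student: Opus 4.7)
The plan is to translate the incidence condition $F_{x_1}\cap F_{x_2}\neq\emptyset$ into a statement about the fibered product $Y\times_{Y_{J^c}}Y$ and then bound its image in $Y_J\times Y_J$ by a component-by-component dimension count; the essentiality of $\cE$ will supply exactly the room needed.

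I would start by observing that because $Y\subset \p_J\times \p_{J^c}$, each $p\in Y$ is determined by the pair $(\pi_J(p),\pi_{J^c}(p))$; consequently $F_{x_1}\cap F_{x_2}\neq\emptyset$ iff there exist $p_1,p_2\in Y$ with $\pi_J(p_i)=x_i$ and $\pi_{J^c}(p_1)=\pi_{J^c}(p_2)$. The locus $B\subset Y_J\times Y_J$ of such bad pairs is therefore the closed image of $T:=Y\times_{Y_{J^c}}Y$ under $\psi:=(\pi_J\circ\mathrm{pr}_1,\pi_J\circ\mathrm{pr}_2)$. Since $J$ is consistent of size $d=\dim Y$, Proposition~\ref{defisdef} combined with Theorem~\ref{unique} forces $\tau_J=d$, so $\dim(Y_J\times Y_J)=2d$, and the goal reduces to $\dim B<2d$.

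Next I would estimate $\dim\psi(C)$ for each irreducible component $C$ of $T$. If $C$ dominates $Y$ under both projections, then the generic fiber of $\mathrm{pr}_1|_C$ lies inside a generic fiber of $\pi_{J^c}$ and has dimension at most $d-\tau_{J^c}$, giving $\dim\psi(C)\leq\dim C\leq 2d-\tau_{J^c}$. Otherwise $\mathrm{pr}_i(C)\subsetneq Y$ for some $i$, in which case $\pi_J(\mathrm{pr}_i(C))$ is a proper closed subvariety of $Y_J$ of dimension at most $d-1$, so $\dim\psi(C)\leq 2d-1$. In either case $\dim\psi(C)\leq \max(2d-\tau_{J^c},\,2d-1)$.

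To finish, I would verify $\tau_{J^c}\geq 1$. Because $Y\subset\p_\E$, the full Kodaira map is the inclusion, so $\tau_\cE=d$ and $\Def(\cE)=d-k$. Assuming $k>d$ (otherwise $J^c=\emptyset$ and the lemma is vacuous), $J^c$ is a non-empty proper subcollection of $\cE$, and essentiality gives $\Def(J^c)>\Def(\cE)$, i.e.\ $\tau_{J^c}-(k-d)>d-k$, forcing $\tau_{J^c}>0$. Combined with the previous step, $\dim B\leq 2d-1<2d$, and $B$ is a proper closed subvariety of $Y_J\times Y_J$. The delicate point I foresee is that $T$ may have components over loci where $\pi_{J^c}$ has jumping fibers (so $\dim T$ itself could exceed $2d-\tau_{J^c}$); bounding $\dim\psi(C)$ directly, rather than $\dim C$, sidesteps this.
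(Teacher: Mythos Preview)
Your proof is correct and takes a genuinely different route from the paper's. The paper argues by contradiction: assuming every pair $(x_1,x_2)$ is bad and fixing a point $x_0\in Y_J$ with finite fiber $\pi_J^{-1}(x_0)$, it extracts from the hypothesis a section $s:U\to Y$ of $\pi_J$ over an open $U\subset Y_J$ along which $\pi_{J^c}$ is constant; since $\pi_J$ is generically finite, $s(U)$ contains an open subset of $Y$, forcing $\pi_{J^c}$ to be constant on all of $Y$ and contradicting essentiality. Your approach replaces this section-and-finiteness argument with a direct dimension bound on the fibered product $T=Y\times_{Y_{J^c}}Y$, handling its components according to whether they dominate $Y$.

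Both arguments ultimately hinge on the same consequence of essentiality, namely $\tau_{J^c}\geq 1$ (equivalently, $\pi_{J^c}$ is non-constant). What your approach buys is robustness: it avoids the mildly delicate step of producing a well-defined section and arguing that its image is open in $Y$. What the paper's argument buys is brevity and a more qualitative flavor. One small remark: in the edge case $k=d$ the lemma is not so much ``vacuous'' as inapplicable (indeed false as stated, since then $J^c=\emptyset$ and each $F_{x_i}$ is the same single point), but this case does not arise in the paper's applications, where $\cE$ is overdetermined and hence $k>d$.
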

\begin{proof}
The condition on sets $F_{x_1}$ and $F_{x_2}$ to be disjoint is open in the space of pairs, so since $Y$ is irreducible it is enough to show that there exists at least one pair $x_1,x_2$ with $F_{x_1}\cap F_{x_2} =\varnothing$. 

Assume otherwise, then for a given point $x_0\in Y_J$ there exists an preimage $y_0\in \pi_J^{-1}(x_0)$ and an open set $U\in Y_J$ such that for any $x\in U$ there exists $y\in  \pi_J^{-1}(x)$ with $\pi_{J^c}(y)=\pi_{J^c}(y_0)$. So there exists a section $s: U \to Y$ of $\pi_J$ defined by $s(x)=y$ with a property that $\pi_{J^c}\circ s$ is a constant map on $U$. But since $\pi_J$ is a finite morphism, the image $s(U)$ is an Zariski open in $Y$, and, therefore $\pi_{J^c}$ is constant on $Y$, which contradicts the essentiallity of  $E_1,\ldots, E_k$.
\end{proof}

The main result of this subsection is the following proposition.
 \begin{Proposition}\label{esszero}
 Let  $E_1,\ldots,E_k$ be an essential collection  of linear systems on a quasi-projective irreducible variety $X$. Then for the generic point ${\bf s}\in R_\E$, the zero set $Z_{\bf s}$ of a system given by ${\bf s}$ is a single fiber of the Kodaira map $\Phi_E$.
 \end{Proposition}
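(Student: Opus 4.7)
The plan is to reduce to the injective case, apply Lemma~\ref{essdisjoint}, and finish with a short codimension count on linear subspaces. Since every Kodaira map factors through $\Phi_\cE\colon X\to Y_\cE$, replacing $X$ by $Y:=Y_\cE$ and $\cE$ by its restriction $\widetilde{\cE}$ preserves all $\tau_J$, hence all defects and essentiality, while $Z_{\bf s}=\Phi_\cE^{-1}(Z_{\widetilde{\bf s}})$ turns ``single point of $Y$'' into ``single fiber of $\Phi_\cE$'' on the original $X$. Thus it suffices to show $|Z_{\bf s}|=1$ for a generic ${\bf s}\in R_\cE$ on $Y$, where now $\widetilde\cE$ is injective and $\dim Y = d = \tau_\cE$.

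Using Theorem~\ref{unique}(iii) together with Proposition~\ref{defisdef}, I pick a subcollection $I\subset\{1,\ldots,k\}$ of size $d$ with $\Def(I)=0$, so $\pi_I\colon Y\to Y_I$ is dominant and generically finite of some degree $m$. For generic ${\bf s}_I=(s_i)_{i\in I}$ the zero set on $Y_I$ is a finite set $\{x_1,\ldots,x_N\}$, which pulls back to $Nm$ points $y_{j,l}\in\pi_I^{-1}(x_j)\subset Y$. A given $y_{j,l}$ belongs to the full $Z_{\bf s}$ precisely when $p_i(y_{j,l})\in H_{s_i}$ for every $i\in I^c$; by base-point freeness of each $E_i$ this carves out a codimension-$(k-d)$ linear subspace $L_{j,l}\subset\prod_{i\in I^c}E_i$, and the fiber of $R_\cE$ over ${\bf s}_I$ is exactly $\bigcup_{j,l}L_{j,l}$.

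Lemma~\ref{essdisjoint}, applied with $J=I$, gives pairwise disjointness of the finite sets $F_{x_j}=p_{I^c}(\pi_I^{-1}(x_j))\subset\p_{I^c}$ for generic choices of the $x_j$'s, and within any single fiber $\pi_I^{-1}(x_j)$ the $m$ points must have distinct $p_{I^c}$-images by the injectivity of $\widetilde\cE$. Consequently, for generic ${\bf s}_I$ all $Nm$ points $z_{j,l}:=p_{I^c}(y_{j,l})\in\p_{I^c}$ are pairwise distinct, so for any $(j,l)\neq(j',l')$ there is some $i\in I^c$ with $p_i(z_{j,l})\neq p_i(z_{j',l'})$, yielding an extra independent linear condition on $s_i$; this forces $L_{j,l}\cap L_{j',l'}$ to have codimension at least $k-d+1$. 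Hence a generic element of $\bigcup_{j,l}L_{j,l}$ lies in exactly one $L_{j,l}$, which translates directly to $|Z_{\bf s}|=1$ for generic ${\bf s}\in R_\cE$, completing the proof. The main technical obstacle I anticipate is the step of passing from ``generic pairs $x,x'\in Y_I$'' in Lemma~\ref{essdisjoint} to ``for generic ${\bf s}_I$, all pairs $x_j,x_{j'}$ of solutions are generic'', which should follow from a Bertini/monodromy argument exploiting irreducibility of the universal solution variety over $\prod_{i\in I}E_i$.
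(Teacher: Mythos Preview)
Your proof is essentially the paper's: reduce to the injective case on $Y_\cE$, pick a size-$d$ consistent subcollection $I$ via Theorem~\ref{unique}(iii), and use Lemma~\ref{essdisjoint} to show that the finitely many solutions of a generic $I$-system have pairwise distinct $p_{I^c}$-images, so that generic hyperplanes from $I^c$ isolate a single point. Your codimension comparison among the linear subspaces $L_{j,l}$ is just a rephrasing of the paper's ``separated by generic hyperplanes'' step, and your explicit treatment of two points in the same $\pi_I$-fiber (via injectivity of $\widetilde\cE$) fills in a detail the paper leaves implicit.

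The obstacle you flag at the end is exactly the step the paper also glosses over: the paper shows that the disjointness condition holds for a generic $r$-tuple in $Y_I^r$ and then tacitly assumes that the solution tuple of a generic ${\bf s}_I$ is generic in this sense. So both arguments share the same gap. Your suggested fix via irreducibility of the universal solution variety is one standard route; a direct alternative is a dimension count on the bad two-point incidence $\{({\bf s}_I,x,x'):s_i(x)=s_i(x')=0,\ F_x\cap F_{x'}\neq\varnothing\}$, stratified by $T=\{i\in I:p_i(x)=p_i(x')\}$, where essentiality of $\cE$ applied to the proper subcollection $T\cup I^c$ gives precisely the inequality $\Def(T\cup I^c)>\dd(\cE)$ needed to keep each stratum from dominating $\E_I$.
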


 \begin{proof}
As in subsection~\ref{prop} we can assume that $\cE$ is injective by replacing $X$ with $Y_\E= \Phi_\E(X)\subset \p_\E=\prod_i E_i^*$. Note that the collection $\cE$ restricted to $Y_\E$ is still essential.

Therefore, it is enough to show that for an irreducible variety $Y_\E\subset \p_\E$ so that $\cE$ is an essential collection on $Y_\E$, and for the generic choice of ${\bf s} \in R_\cE$ the intersection $Y \cap H_{s_1}\cap\ldots\cap H_{s_k}$ is a point. 

 Let $J\subset \{1,\ldots,k\}$ be consistent subcollection of size $\tau_\E$, $J^c$ be its complement and let $\pi_J, \pi_{J^c}$ be two natural projections restricted to $Y_\E$:

\begin{center}
\begin{tikzcd}
Y_\E \arrow[r, "p_{J^c}"] \arrow[dd, "p_{J}"]& \p_{J^c}=\prod_{i\notin J} \p(E_i^*)\\
&& \\
\p_J=\prod_{i\in J} \p(E_i^*)& 

\end{tikzcd}    
\end{center}
The generic intersection $Y_J \cap \left(\bigcap_{i\in J}H_{s_i} \right)$ is nonempty and finite, and hence of the same cardinality. It is enough to show that for for generic choice of $s_i$'s with $i \in J$ there are no two points $x,y$ in $Y_J \cap \left(\bigcap_{i\in J}H_{s_i} \right)$  with $p_{J^c}(p_{J}^{-1}(x))\cap p_{J^c}(p_{J}^{-1}(y))\ne \varnothing$. Indeed, in such a case any two points in the finite intersection 
$$
Y_\E \cap \bigcap_{i\in J}H_{s_i} =\pi_J^{-1}\left(Y_J \cap \bigcap_{i\in J}H_{s_i} \right)
$$
 would be separated by generic hyperplanes $H_{s_i}$'s with $i\notin  J$. 

Let the cardinality of the generic intersection $Y_J \cap \left(\bigcap_{i\in J}H_{s_i} \right)$ be equal to $r$, we will show that for the generic $r$-tuple of points $x_1,\ldots,x_r$ the sets $F_{x_i}=\pi_{J^c}(\pi_J^{-1}(x_i))$, for $i=1,\ldots,r$ are  mutually disjoint. Since this condition is open in the space of tuples $x_1,\ldots, x_r$ and $Y_J$ is irreducible it is enough to show that there exist at least one tuple with such property. 

Assume otherwise, that for any tuple $x_1,\ldots,x_r$ the sets $F_{x_i}$, for $i=1,\ldots,r$ are not mutually disjoint. This is only possible if for any pair of point $x_1, x_2$ the sets $F_{x_1}, F_{x_2}$ are not disjoint, but this contradicts Lemma~\ref{essdisjoint} since $Y_\E$ satisfy its conditions.
 \end{proof}

\begin{Rem}
The condition of collection $\cE$ to be essential is sufficient for the statements of Lemma~\ref{essdisjoint} and Proposition~\ref{esszero} but not necessary.  Indeed, consider a pair  $(X, \cE)$ with $X = \p^1\times\p^1$ and $\cE=(E_1,E_2,E_3)$, where
$$
E_1=E_2 = H^0(X, \cO(k,0)), \quad E_2=H^0(X, \cO(0,l)), \text{ for } k,l>0.
$$
The collection $\cE$ is overdetermined, injective but not essential, since $\dd(\cE) = -1$ and $\Def(E_1,E_2) = -1$. The system 
\begin{equation}\label{eq2}
    s_1=s_2=s_3=0 \quad s_i\in E_i
\end{equation} 
has a root on $X$ if and only if 
\begin{equation}\label{eq3}
    s_1=s_2=0  \quad s_i\in E_i
\end{equation} 
has a root. The zero set of generic consistent system~(\ref{eq3}) is $p\times\p^1$ and therefore, the zero set of generic consistent system~(\ref{eq2}) is $l$ distinct points. In particular, if $l>1$, the generic non-empty zero set of system~(\ref{eq2}) is not a single fiber of the Kodaira map as $\cE$ is an injective system. But if $l=1$ the generic consistent system~(\ref{eq2}) has one root, so its zero set is a single fiber of Kodaira map. 

In the toric case, the sufficient and necessary condition for generic non empty zero set to be a single fiber of the Kodaira map was obtained in \cite[Theorem 2.2]{Mon19}. This result is based on classification of systems of Laurent polynomials which have only one root obtained in \cite{EGu}.
\end{Rem}

 \subsection{Generic non-empty zero set}  In this subsection we study the generic non-empty zero set of a system of equations $s_1=\ldots=s_k=0$ with $s_i\in E_i$. First let us summarize results of the last two sections on the generic non-empty zero set $Z_{\bf s}$. 
 
 \begin{Theorem}\label{maingen}
  Let $\cE$ be an overdetermined  collection of linear series on an irreducible variety $X$, with the essential subcollection $J$. Then for the generic consistent system ${\bf s}\in R_\cE$, the zero set $Z_{\bf s}$ is the generic zero set of the collection $\cE_{J^c}=(E_i)_{i\notin J}$ restricted to a fiber of a Kodaira map $\Phi_J$.
 \end{Theorem}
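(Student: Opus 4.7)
The plan is to assemble the statement from the three main tools developed earlier in the section: Proposition~\ref{esszero} (generic non-empty zero set of an essential collection is a fiber of the Kodaira map), Proposition~\ref{solv} (restricting the non-essential part to a generic fiber gives a generically consistent collection), and Theorem~\ref{indep} (the consistency variety $R_\cE$ is the preimage of $R_{\cE_J}$). The essential subcollection $J$ controls consistency, while $\cE_{J^c}$ contributes only in a transverse way.

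First, I would use Theorem~\ref{indep} to rewrite $R_\cE = p^{-1}(R_{\cE_J})$, where $p\colon \E \to \E_J$ is the projection. Thus picking a generic ${\bf s}\in R_\cE$ amounts to picking a generic ${\bf s}_J\in R_{\cE_J}$ together with an independently chosen generic ${\bf s}_{J^c}\in \E_{J^c}$. Since the zero set of the full system is $Z_{\bf s} = Z_{{\bf s}_J}\cap Z_{{\bf s}_{J^c}}$, the two pieces can be analyzed separately and then intersected.

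Next, because $J$ is essential, the subcollection $\cE_J$ is an essential overdetermined collection in its own right, and Proposition~\ref{esszero} applies: for generic ${\bf s}_J\in R_{\cE_J}$ the zero set $Z_{{\bf s}_J}$ is a single fiber $X_a = \Phi_J^{-1}(a)$ of the Kodaira map $\Phi_J$ (here we use that $\Phi_J = \Phi_{E_J}$ and that by Lemma~\ref{fiber} its fibers agree with those of the product of the individual Kodaira maps). Proposition~\ref{solv}, together with the Zariski-open set $U\subset X$ on which everything is well-behaved, guarantees that the fiber $X_a$ thus produced lies in the locus where $\cE_{J^c}|_{X_a}$ is a generically consistent collection of linear series.

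Finally, I would argue that restricting the independently generic tuple ${\bf s}_{J^c}$ to this generic fiber $X_a$ still yields a generic element in the space of systems defined by $\cE_{J^c}|_{X_a}$. This follows because the restriction map $\E_{J^c} \to \prod_i H^0(X_a, \cL_i|_{X_a})$ is linear and (on a Zariski open set of fibers) surjective onto each factor since $E_i$ is base-point free and globally generates $\cL_i$; hence the image of a generic point is generic in the target. Intersecting with $Z_{{\bf s}_J} = X_a$ then identifies $Z_{\bf s}$ with the generic zero set of $\cE_{J^c}|_{X_a}$.

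The main technical obstacle is the last step: one must carefully coordinate two independent notions of genericity, namely genericity of ${\bf s}_J$ (which determines which fiber $X_a$ arises) and genericity of ${\bf s}_{J^c}$ (which must remain generic after restriction to that particular fiber). This is handled by a standard constructibility argument, combining the generic flatness of the family of fibers $\{X_a\}$ over $Y_J$ with the openness of the condition that a system in $\cE_{J^c}|_{X_a}$ has the expected generic zero set; the independence of ${\bf s}_J$ and ${\bf s}_{J^c}$ secured by Theorem~\ref{indep} is precisely what makes the two generic conditions compatible.
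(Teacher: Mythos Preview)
Your proposal is correct and follows essentially the same route as the paper: the paper's proof invokes exactly Theorem~\ref{indep}, Proposition~\ref{esszero}, and Proposition~\ref{solv}, in the same roles you assign them, and is considerably terser about the coordination of the two genericities. One small quibble: base-point-freeness of $E_i$ does not imply that restriction $E_i \to H^0(X_a,\cL_i|_{X_a})$ is surjective, but you do not need this---by definition $E_i|_{X_a}$ \emph{is} the image of that restriction map, so a generic element of $E_i$ automatically maps to a generic element of $E_i|_{X_a}$.
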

 \begin{proof}
 By Theorem~\ref{indep} and Proposition~\ref{esszero} the zero set of a generic consistent system ${\bf s} \in R_\cE$ is the zero set of a generic system $\cE_{J^c}=(E_i)_{i\notin J}$ restricted to a unique fiber of the Kodaira map $\Phi_J$. Moreover, such a restriction is generically consistent by Proposition~\ref{solv}.
 \end{proof}
 
 Theorem~\ref{maingen} expresses the  zero set of a system generic in the space of consistent systems defined by the collection $\cE$ as the zero set of the system which is generic in the space of {\em all systems} defined by collection $\cE_{J^c}$ restricted to a fiber of $\Phi_J$. We will use this result in coming sections to reduce questions about topology of generic non-empty zero set to questions about  topology of generic zero set.

 \begin{Proposition}\label{SBN}
 Let $f:X\to Y$ be a morphism between two algebraic varieties with $X$ - smooth. Then, the generic fiber is smooth.
 \end{Proposition}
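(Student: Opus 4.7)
The plan is to establish this by reducing to a dominant morphism and invoking generic smoothness, which is valid over $\C$ (the setting of the paper, where everything is in characteristic zero).

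First I would replace $Y$ by the closure $\overline{f(X)}$: fibers over points outside the image are empty and hence vacuously smooth, so we may assume $f$ is dominant. Further shrinking $Y$ to its open dense smooth locus lets us assume $Y$ itself is smooth. Set $n=\dim X$ and $m=\dim Y$; the expected fiber dimension is then $n-m$.

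Next, generic flatness yields a non-empty Zariski open $V\subseteq Y$ over which $f$ is flat. On the flat locus, $f$ is smooth at a point $x$ if and only if the fiber $f^{-1}(f(x))$ is smooth at $x$, so the non-smooth locus $\Sigma\subset f^{-1}(V)$ of $f$ is closed and its intersection with each fiber $X_y$ is precisely the singular locus of $X_y$. The characteristic-zero hypothesis ensures that $K(X)/K(Y)$ is separable, so the conormal sequence
\[
f^*\Omega^1_Y\longrightarrow\Omega^1_X\longrightarrow\Omega^1_{X/Y}\longrightarrow 0
\]
together with smoothness of $X$ forces the geometric generic fiber $X_\eta$ to be smooth over $K(Y)$. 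Consequently $\Sigma$ does not dominate $Y$, and $\overline{f(\Sigma)}$ is contained in a proper closed subset $W\subsetneq Y$. For every $y$ in the non-empty open set $V\setminus W$, the entire fiber $X_y$ lies outside $\Sigma$ and is therefore smooth, which is the desired conclusion.

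The main technical point is the smoothness of the geometric generic fiber; this is precisely where characteristic zero enters in an essential way, and in positive characteristic the corresponding statement can fail (e.g.\ for Frobenius). The remaining ingredients --- reduction to a dominant morphism between smooth varieties, generic flatness, and the equivalence of smoothness of the morphism with smoothness of its fibers on a flat locus --- are standard and formal.
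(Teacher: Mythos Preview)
Your argument is correct and is essentially the textbook proof of generic smoothness in characteristic zero (cf.\ Hartshorne, Corollary~III.10.7). The paper, however, takes a different and more geometric route: after reducing to affine $Y$, it applies Noether normalization to obtain a finite morphism $g:Y\to\C^k$ with $k=\dim Y$, and then invokes the Bertini theorem to conclude that the generic fiber of the composite $g\circ f:X\to\C^k$ is smooth; since $g$ is finite, that generic fiber is a finite disjoint union of fibers of $f$, forcing those fibers to be smooth as well. Your approach makes the characteristic-zero hypothesis explicit through separability of $K(X)/K(Y)$ and relies on generic flatness together with the conormal sequence, whereas the paper trades these for the classical pair Noether normalization plus Bertini, with characteristic zero hidden inside the Bertini statement. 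Both arguments are short; the paper's version avoids any discussion of relative differentials or flatness and may read as more elementary, while yours dispenses with the auxiliary reduction to $\C^k$ and pinpoints exactly where the ground field matters.
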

 \begin{proof}
 Since the condition is local in $Y$ we can assume $Y$ to be affine. By the Noether normalization lemma there exists a finite morphism $g:Y\to \C^k$, with $k=\dim Y$. By the Bertini theorem the generic fiber of the composition $g\circ f: X\to \C^k$ is smooth. But since the generic fiber of $g\circ f$ is a finite union of disjoint fibers of $f$, the generic fiber of $f$ is also smooth.
 \end{proof}
 
 \begin{Theorem}
Let $X$ be smooth algebraic variety and let $E_1,\ldots, E_k$ be base-point free linear systems on $X$. Then for generic consistent $k$-tuple ${\bf s} =(s_1,\ldots,s_k)\in R_E$, the zero set $Z_{\bf s}$ is smooth. Moreover, the arithmetic genus of $Z_{\bf s}$ is constant for a generic choice of $\bf s$.
 \end{Theorem}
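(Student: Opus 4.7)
The plan is to treat smoothness and constancy of arithmetic genus separately, reducing each to a standard statement via Theorem~\ref{maingen}.

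For smoothness, I would first dispose of the case where $\cE$ is generically consistent ($R_\cE=\E$): iterated application of the classical Bertini theorem, valid because $X$ is smooth and each $E_i$ is base-point free, yields smoothness of the zero set of a generic ${\bf s}$. Assume then that $\cE$ is overdetermined, and let $J\subset\{1,\ldots,k\}$ be its essential subcollection. By Theorem~\ref{maingen}, for generic consistent ${\bf s}\in R_\cE$ the zero set $Z_{\bf s}$ coincides with the zero set of a generic system from $\cE_{J^c}$ on a fiber $X_a$ of the Kodaira map $\Phi_J$. Next I would apply Proposition~\ref{SBN} to $\Phi_J$ restricted to the smooth open set $U\subset X$ of Section~\ref{consvar} to conclude that the generic fiber $X_a$ is smooth. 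By Proposition~\ref{solv} the collection $\cE_{J^c}|_{X_a}$ is generically consistent, and since the $E_i$ are base-point free on $X$ their restrictions to $X_a$ remain base-point free. Iterated classical Bertini on the smooth variety $X_a$ then gives smoothness of the generic zero set, hence of $Z_{\bf s}$.

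For the arithmetic genus part, I would consider the incidence variety $\widetilde R_\cE\subset X\times\E$ together with its projection $\pi_2:\widetilde R_\cE\to R_\cE$, whose scheme-theoretic fiber over ${\bf s}$ is $Z_{\bf s}$. By generic flatness there exists a dense open $V\subset R_\cE$ over which $\pi_2$ is flat; standard invariance of Euler characteristics in flat proper families then shows that $\chi(\cO_{Z_{\bf s}})$, and therefore the arithmetic genus, is constant on $V$. Since $R_\cE$ is irreducible (proved in Section~\ref{consvar}), this single constant value is the generic one.

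The main obstacle I expect is the properness issue in the arithmetic genus argument: because $X$ is only assumed to be a smooth algebraic variety and need not be projective, $Z_{\bf s}$ can fail to be proper, and invariance of Euler characteristics in a flat family genuinely requires properness. I would handle this by passing to a projective compactification $\bar X \supset X$, extending the base-point free linear series $E_i$ to linear series $\bar E_i$ on $\bar X$, and applying the above flatness argument to the family of closures $\{\overline{Z_{\bf s}}\}$ inside $\bar X$. The only nontrivial point is then to argue that for generic ${\bf s}\in R_\cE$ the closure $\overline{Z_{\bf s}}$ does not pick up extra components along the boundary $\bar X\setminus X$, which should follow from irreducibility of the extended consistency variety together with a dimension count against the boundary incidence locus; once this is established, constancy of arithmetic genus on $\bar X$ transfers to constancy on $X$.
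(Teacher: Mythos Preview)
Your proposal is correct in outline but takes a noticeably longer route for smoothness than the paper does. The paper does not invoke Theorem~\ref{maingen} or Bertini at all for this part: it simply observes that the incidence variety $\widetilde R_\cE$ is a vector bundle over the smooth variety $X$, hence itself smooth, and then applies Proposition~\ref{SBN} directly to the projection $\pi_2:\widetilde R_\cE\to R_\cE$. Since $Z_{\bf s}\cong\pi_2^{-1}({\bf s})$, generic smoothness of the fibers of $\pi_2$ is exactly what is needed. Your detour through the essential subcollection, smoothness of the Kodaira fiber, and iterated Bertini works, but it is doing more than necessary; also note a small wrinkle: you apply Proposition~\ref{SBN} to $\Phi_J|_U$, which only tells you $X_a\cap U$ is smooth, not $X_a$ itself---you should instead apply it to $\Phi_J$ on all of $X$, which is already smooth by hypothesis.

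For the arithmetic genus your argument is essentially identical to the paper's: generic flatness of $\pi_2$ over an open subset of $R_\cE$ and constancy of $\chi(\cO)$ in flat families. The paper does not address the properness issue you raise; your instinct that something must be said here is sound, though the compactification sketch you give (extending the $E_i$ to $\bar X$ and controlling boundary components) would require real work to make rigorous.
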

 
 \begin{proof}
 Let $\pi_1, \pi_2$ be two natural projections from $X\times\E$ to $X,\E$ respectively. Denote by $\pi_1, \pi_2$  also their restrictions to a incidence variety $\widetilde R_E$. For a system ${\bf s}\in R_E$ the zero set $Z_{\bf s}$ is given by $\pi_1(\pi_2^{-1}({\bf s}))$, in particular is isomorphic to $\pi_2^{-1}({\bf s})$. But since $\widetilde R_E$ is smooth ($\widetilde R_E$  is a vector bundle over $X$), the fiber of $\pi_2$ over the generic point ${\bf s} \in R_E$ is smooth  by Proposition~\ref{SBN}, and therefore such is the generic zero set $Z_{\bf s}$.
 
 For the second part notice that any algebraic morphism to an irreducible variety is flat over Zariski open subset. So for some Zariski open $U\subset  R_E$ the projection $\pi_2^{-1}(U)\to U$ is a flat family. The statement then follows from a fact that arithmetic genus is constant in flat families.
 \end{proof}

\subsection{Reduction theorem}
In this subsection we will formulate and prove the Reduction theorem. First we define what does it mean for two collections of linear systems to be equivalent.

\begin{Definition}\label{equiv}
Two collections $E_1,\ldots,E_k$ and $W_1,\ldots,W_l$ of linear systems on a quasi-projective irreducible variety $X$ are  called \emph{quivalent} if there exist Zariski open subsets $U\subset R_\E$ and $V\subset R_\W$ such that for any $u\in U$  there exists $v\in V$ (and for any $v\in V$ there $u\in U$) such that the zero sets $X_u$ and $X_v$ coincide.
\end{Definition}
 
\begin{Theorem}[Reduction theorem]
Any collection $\cE=(E_1,\ldots,E_k)$ of generically inconsistent linear series is equivalent to some collection $\cW$ of minimal defect $-1$. Moreover, if $\dd (\cE)= -d$ and $\cE_J=(E_1,\ldots,E_r)$ is an essential subcollection of $\cE$, then $\cW$ can be defined as
$$
W_1=\ldots=W_{r-d+1}=E_J,\quad W_{r-d+2}=E_{r+1},\ldots, W_{k-d+1}=E_k.
$$
\end{Theorem}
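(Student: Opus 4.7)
The plan is to verify two facts: that the collection $\cW$ has minimal defect exactly $-1$ with essential subcollection consisting of the $r-d+1$ copies of $E_J$, and that the descriptions of the generic non-empty zero sets of $\cE$ and of $\cW$ furnished by Theorem~\ref{maingen} coincide. The matching of these descriptions then produces the Zariski open sets $U \subset R_\cE$ and $V \subset R_\cW$ required by Definition~\ref{equiv}.

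For the defect computation I would take an arbitrary subcollection $K$ of $\cW$ and write $m = |K \cap \{1,\ldots,r-d+1\}|$, identifying the remaining indices with a subset $S \subseteq \{r+1,\ldots,k\}$. A section of the product $E_J^m \prod_{i \in S} E_i$ vanishes at a point $x$ exactly when one of its factors does, so Lemma~\ref{fiber} implies that the Kodaira map of $W_K$ has the same fibers as that of $E_{J \cup S}$; in particular $\tau(W_K) = \tau_{J \cup S}$. When $m \geq 1$ this gives
\[
\Def_\cW(K) \;=\; \tau_{J \cup S} - m - |S| \;=\; \Def_\cE(J \cup S) + r - m \;\geq\; -d + r - m,
\]
which is $\geq 0$ for $m \leq r-d$ and equals $-1$ for $m = r-d+1$ with $S = \emptyset$. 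In the remaining case $m = 0$ the quantity $\Def_\cW(K) = \Def_\cE(K)$ with $K \subseteq J^c$; the subadditivity inequality $\tau_{J \cup K} \leq \tau_J + \tau_K$ from Corollary~\ref{intersectsum} together with $\Def_\cE(J \cup K) \geq -d$ forces $\tau_K \geq |K|$, so $\Def_\cE(K) \geq 0$. Hence $\dd(\cW) = -1$, and since strictly shrinking $\{1,\ldots,r-d+1\}$ yields defect $\geq 0 > -1$, Theorem~\ref{unique} identifies this as the unique essential subcollection of $\cW$.

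For the equivalence, Theorem~\ref{maingen} applied to $\cE$ realizes the generic non-empty zero set $Z_{\bf s}$ as the generic zero set of $(E_{r+1},\ldots,E_k)$ restricted to the unique fiber $X_a$ of $\Phi_{E_J}$ cut out by $s_1,\ldots,s_r$ (unicity from Proposition~\ref{esszero}). Applied to $\cW$, the same theorem realizes the generic non-empty zero set of $\cW$ as the generic zero set of the same collection $(E_{r+1},\ldots,E_k)$ on a fiber of $\Phi_{E_J^{r-d+1}}$; but Lemma~\ref{fiber} gives that $\Phi_{E_J^{r-d+1}}$ and $\Phi_{E_J}$ have identical fibers. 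To match the sets $U$ and $V$ explicitly, given generic $\bf s \in R_\cE$ with induced fiber $X_a$, choose $\bf t \in R_\cW$ by selecting $t_1,\ldots,t_{r-d+1} \in E_J$ that simultaneously vanish on $X_a$ (which is possible because $E_J$ is base-point free and the hyperplanes in $\p(E_J^*)$ through the corresponding point $a \in Y_{E_J}$ form a codimension-one subspace of $E_J$, and generic $r-d+1$ such sections cut out $\{a\}$ by dimension count) and setting the remaining entries of $\bf t$ equal to $s_{r+1},\ldots,s_k$; the reverse correspondence is identical.

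The main obstacle is the $m = 0$ case of the defect calculation, which is the only nontrivial place where the essentiality of $J$ (rather than merely $\dd(\cE) = -d$) is used; handling it cleanly via the subadditivity inequality, or equivalently via part (ii) of Theorem~\ref{unique}, is the key input that makes the rest of the argument a direct assembly of Theorem~\ref{maingen} and Proposition~\ref{esszero}.
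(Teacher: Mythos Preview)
Your argument is correct and follows essentially the same route as the paper: both proofs identify $\{1,\ldots,r-d+1\}$ as the essential subcollection of $\cW$, observe via Lemma~\ref{fiber} that $\Phi_{E_J}$ and $\Phi_{E_J^{r-d+1}}$ have identical fibers, and then invoke Proposition~\ref{esszero} and Theorem~\ref{maingen} to match the generic non-empty zero sets. The only difference is that the paper simply asserts the defect and essential-subcollection claims for $\cW$, whereas you supply the full case analysis (including the $m=0$ case via subadditivity, equivalently Theorem~\ref{unique}(ii)); this makes your write-up more self-contained but not structurally different.
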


 \begin{proof}
First note that collections $\cE_J=(E_1,\ldots,E_r)$ and $\cW_K=(W_1,\ldots, W_{r-d+1})$ are equivalent collections which are the essential subcollections of $\cE$ and $\cW$ respectively. Indeed, since both collections are essential, by Proposition~\ref{esszero} they are equivalent if and only if generic fibres of their Kodaira maps  coincide. But this follows directly from the fact that $W_1=\ldots=W_{r-d+1}=E_J=E_1\cdot\ldots \cdot E_r$.
 
Therefore, we have two collections $\cE$ and $\cW$ with equivalent essential subcollections $J=(E_1,\ldots,E_r)$ and $K=(W_1,\ldots, W_{r-d+1})$ and coinciding complements: 
$$
\cE_{J^c}= (E_{r+1},\ldots, E_k) = (W_{r-d+2},\ldots, W_{k-d+1})= \cW_{K^c}.
$$
But any two such collections are equivalent since the zero set $Z_{\bf u}$ of a generic system is a solution of the system compliment to the essential subsystem restricted to the zero set of the essential subsystem.
 \end{proof}

\section{Equivariant linear systems on homogeneous varieties}\label{hom}

This section is devoted to the study of $G$-invariant linear systems on a  complex variety $X$ with a transitive $G$-action. First, we work with general homogeneous space and prove Theorem~\ref{mainhom} which reduces the study of generic non-empty zero sets of overdetermined  systems to the study of generic complete intersections.

We apply then this result to obtain Theorem~\ref{mainsph} which together with results of \cite{KK16} provides a strategy for computation of discrete invariants of generic non-empty zero set of a system of equations associated to a collection of overdetermined  linear series on a spherical homogeneous space.

\subsection{Linear systems on homogeneous varieties}
In this subsection we will study some general results on linear systems on homogeneous varieties. The main result of this subsection is a reduction of an overdetermined  collection of linear series to several isomorphic generically consistent collections. 

Let $G$ be a connected algebraic group, and $X=G\/H$ be a $G$-homogeneous space. Let us denote by $x_0\in X$ the class of identity element $e\cdot H \in G\/H$. Let $\cL_1, \ldots, \cL_k$ be globally generated $G$-linearized line bundles on $G\/H$. For each $i=1, \ldots, k$ let $E_i$ be a nonzero $G$-invariant linear system for $\cL_i$ i.e. $E_i$ is a finite dimensional $G$-invariant subspace of $H^0(X, \cL_i)$.

Each $E_J$ is $G$-invariant and hence it is base-point free on $G\/H$. Thus the Kodaira map $\Phi_J$ is defined on the whole $G\/H$.  Since $E_1,\ldots,E_k$ are $G$-invariant, $E_J^*$ is a linear representation of $G$ for any $J\subset\{1,\ldots,k\}$ and, therefore, there is a natural action of $G$ on $\p(E_J^*)$. It is easy to see that the Kodaira map $\Phi_J$ is equivariant for this action. Therefore, the image $\Phi_J(X)$ is a quasi-projective homogeneous $G$-variety isomorphic to $G\/(G_{\Phi_J(x_0)})$, where $G_{\Phi_J(x_0)}$ is a stabilizer of  $\Phi_J(x_0)\in \p(E_J^*)$. For $J\subset \{1,\ldots,k\}$ we will denote the stabilizer $G_{\Phi_J(x_0)}$ by $\Gamma_J\subset G$.

\begin{Definition} 
Two collections of $G$-invariant linear systems $\cE=(E_1,\ldots,E_k)$, $\cE'=(E_1',\ldots E_k')$ on homogeneous spaces $X, X'$ respectively are isomorphic if there exists an $G$ equivariant isomorphism $f: X\to X'$ and an isomorphism of $G$-linearized line bundles $\phi_i:\cL_i \to f^*\cL_i'$ for any $i=1,\ldots, k$ such that $\phi_i^*\circ f^*(E_i')=E_i$.
\end{Definition}

\begin{Proposition}\label{fibers}
Let $\Phi_J:X\to \p(E_J^*)$ and $\Gamma_J$ be as before then:\\
(i) for any $y\in\Phi_J(X)$ the fiber $F_y:= \Phi_J^{-1}(y)$ has a structure of $\Gamma_J$-variety;\\
(ii) any fiber $F_y$ is isomorphic to $F_{y_0}\cong \Gamma_J\/H$ as $\Gamma_J$-variety;\\
(iii) for any $G$ equivariant linear system $V$ on $X$ and any point  $y\in\Phi_J(X)$ the restriction $E|_{F_y}$ is $\Gamma_J$-invariant. Moreover, a pair $F_y, V\big|_{F_y}$ is isomorphic to the pair $F_{y_0}, V\big|_{F_{y_0}}$.
\end{Proposition}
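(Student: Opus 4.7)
The plan is to exploit the $G$-equivariance of the Kodaira map $\Phi_J$ throughout, and to identify $F_{y_0}$ explicitly as a homogeneous $\Gamma_J$-space.

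First, for (i), I would note that since $\Phi_J$ is $G$-equivariant, the stabilizer $G_y$ of any point $y\in\Phi_J(X)$ preserves the fiber $F_y$. Because $G$ acts transitively on $\Phi_J(X)\cong G/\Gamma_J$, for any $y$ I can choose (non-canonically) some $g\in G$ with $g\cdot y_0=y$; then $G_y=g\Gamma_J g^{-1}$, and transporting this via conjugation by $g$ gives an action of $\Gamma_J$ on $F_y$. For $y=y_0$ this is just the tautological action of $\Gamma_J\subset G$ on $F_{y_0}$, which is canonical.

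For (ii), I would first identify $F_{y_0}$ with $\Gamma_J/H$. Since $H$ stabilizes $x_0$, equivariance gives $H\cdot y_0=y_0$, so $H\subset\Gamma_J$. The orbit $\Gamma_J\cdot x_0$ lies in $F_{y_0}$ and is isomorphic to $\Gamma_J/H$ as a $\Gamma_J$-variety. Conversely, any $x\in F_{y_0}$ can be written as $x=g\cdot x_0$ for some $g\in G$, and $\Phi_J(g\cdot x_0)=g\cdot y_0=y_0$ forces $g\in\Gamma_J$, hence $F_{y_0}=\Gamma_J\cdot x_0\cong \Gamma_J/H$. For an arbitrary $y=g\cdot y_0$, the map $x\mapsto g\cdot x$ restricts to an isomorphism $F_{y_0}\to F_y$, and it intertwines the $\Gamma_J$-action on $F_{y_0}$ with the $G_y$-action on $F_y$ via the conjugation identification from (i).

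For (iii), the $G$-linearization of $\cL$ induces a $G$-action on sections of $\cL$; since $V$ is $G$-invariant and $F_{y_0}$ is a $\Gamma_J$-stable subvariety, the restriction $V|_{F_{y_0}}$ is a $\Gamma_J$-invariant subspace of $H^0(F_{y_0},\cL|_{F_{y_0}})$. For the isomorphism of pairs, I would use the $G$-equivariant morphism $g\colon F_{y_0}\to F_y$ from (ii) together with the isomorphism of line bundles $\cL|_{F_{y_0}}\cong g^*(\cL|_{F_y})$ supplied by the linearization; pulling $V|_{F_y}$ back along $g$ recovers $V|_{F_{y_0}}$, because $V$ itself is $G$-invariant. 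The arguments are essentially bookkeeping with $G$-equivariance, so there is no substantive obstacle; the only point needing care is that for $y\neq y_0$ the $\Gamma_J$-structure on $F_y$ depends on the chosen $g$ with $g\cdot y_0=y$, different choices differing by inner automorphisms of $\Gamma_J$, which is exactly why the statements of (ii) and (iii) are phrased only up to $\Gamma_J$-equivariant isomorphism.
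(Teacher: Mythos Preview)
Your proposal is correct and follows essentially the same approach as the paper's own proof: both use that $G_y=g\Gamma_Jg^{-1}$ for some $g$ with $g\cdot y_0=y$, define the $\Gamma_J$-structure on $F_y$ by conjugation, and use $p\mapsto g\cdot p$ as the equivariant isomorphism $F_{y_0}\to F_y$, with part~(iii) following from $G$-invariance of $V$. If anything, you supply more detail than the paper does---in particular the explicit verification that $F_{y_0}=\Gamma_J\cdot x_0\cong\Gamma_J/H$ and the remark that the $\Gamma_J$-structure on $F_y$ for $y\neq y_0$ is only well defined up to inner automorphism---both of which are implicit in the paper's terse argument.
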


\begin{proof}
For parts $(i)$ and $(ii)$ let $G_y$ be the stabilizer of a point  $y\in \Phi_J(X)$, then the fiber $F_y$ is an homogeneous $G_y$ variety. But also $G_y$ is conjugate to $\Gamma_J$, i.e. $G_y=g \Gamma_J g^{-1}$ for some $g\in G$.  Then an equivariant isomorphism between $F_{y_0}$ and $F_y$ (which also will define $\Gamma_J$ on $F_y$) can be defined by
$$
\Gamma_J\times F_{y_0} \to G_y \times F_y, \quad (\gamma,p)\mapsto (g \gamma g^{-1} , gp).
$$

Part $(iii)$ follows directly from the construction above and the fact that linear system $V$ is $G$-invariant.
\end{proof}

By Proposition~\ref{fibers}, any fiber of the Kodaira map associated to a $G$-invariant linear series is a homogeneous variety and in particular is smooth. Therefore, irreducible components of fibers coincide with connected components. Next proposition is a more precise version of Proposition~\ref{fibers}, which deals with connected components of fibers of the Kodaira map.

\begin{Proposition}\label{concompfibre}
Let $\Phi_J:X\to \p(E_J^*)$ and $\Gamma_J$ be as before then:\\
(i) connected components of fibres of  $\Phi_J$ have a structure of $\Gamma_J^0$-variety;\\
(ii) any two connected components of any two fibres are isomorphic as $\Gamma_J^0$-varieties and, in particular are isomorphic to $\Gamma_J^0\/(\Gamma_J^0\cap H)$, where $\Gamma_J^0$ is the connected component of identity in $\Gamma_J$;\\
(iii) for any $G$ equivariant linear system $V$ on $X$ and two connected components  $C_1, C_2$ of any two fibres, the restrictions $E\big|_{C_1}, E\big|_{C_2}$ are $\Gamma_J^0$-invariant. Moreover,  pairs $C_1, V\big|_{C_1}$ and $C_2, V\big|_{C_2}$ are isomorphic.
\end{Proposition}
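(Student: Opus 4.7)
The plan is to lift Proposition~\ref{fibers} from the full stabilizer $\Gamma_J$ to its identity component $\Gamma_J^0$ by identifying the connected components of each fiber of $\Phi_J$ with the $\Gamma_J^0$-orbits on that fiber. By Proposition~\ref{fibers}, the fiber $F_{y_0}$ is a smooth homogeneous $\Gamma_J$-variety, so its connected components coincide with its irreducible components and are closed subvarieties all of the same dimension. Since $\Gamma_J^0$ is a closed connected normal subgroup of $\Gamma_J$ of finite index, its orbits on $F_{y_0}$ are closed irreducible subvarieties, are permuted transitively by the finite group $\Gamma_J/\Gamma_J^0$, and therefore all have the same dimension. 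A dimension count forces these $\Gamma_J^0$-orbits to be exactly the connected components of $F_{y_0}$; in particular, the connected component $C_0$ containing $x_0$ is a single $\Gamma_J^0$-orbit, hence $\Gamma_J^0$-equivariantly isomorphic to $\Gamma_J^0/(\Gamma_J^0\cap H)$.

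To handle a general fiber $F_y$ and a general connected component $C\subset F_y$, I would transport the picture above along the $G$-action, exactly as in the proof of Proposition~\ref{fibers}. Pick any $g\in G$ with $gx_0\in C$, which exists because $G$ acts transitively on $X$. Since $g$ acts on $X$ as an automorphism sending $F_{y_0}$ to $F_y$ and preserving connectedness, $gC_0$ is a connected subset of $F_y$ containing $gx_0$, so $gC_0=C$. Conjugation $\gamma\mapsto g\gamma g^{-1}$ identifies $\Gamma_J^0$ with the identity component of the stabilizer of $y$, and combining this identification with translation by $g$ endows $C$ with the structure of a $\Gamma_J^0$-variety and gives a $\Gamma_J^0$-equivariant isomorphism $C_0\to C$. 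This establishes $(i)$ and $(ii)$: every connected component of every fiber is $\Gamma_J^0$-equivariantly isomorphic to $\Gamma_J^0/(\Gamma_J^0\cap H)$.

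For part $(iii)$, the restriction $V\big|_{F_{y_0}}$ is $\Gamma_J$-invariant by Proposition~\ref{fibers}$(iii)$, hence its further restriction to the $\Gamma_J^0$-stable subset $C_0$ is automatically $\Gamma_J^0$-invariant. Using the $G$-linearization of the line bundle underlying $V$, translation by $g$ lifts to an isomorphism of pairs $(C_0,V|_{C_0})\cong (C,V|_C)$ intertwining the $\Gamma_J^0$-actions, so the two pairs $(C_1,V|_{C_1})$ and $(C_2,V|_{C_2})$ are isomorphic by composing the two translations that identify each of them with $(C_0,V|_{C_0})$.

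The main technical subtlety, which I expect to be the only real obstacle, is that the choice of $g$ is not canonical: two such choices differ by an element of $\Gamma_J$, so the induced $\Gamma_J^0$-action on $C$ is only canonical up to inner automorphisms of $\Gamma_J^0$ (coming from elements of $\Gamma_J$ acting on $\Gamma_J^0$ by conjugation, which is well defined because $\Gamma_J^0$ is normal in $\Gamma_J$). As in the proof of Proposition~\ref{fibers}, inner automorphisms of a group induce isomorphic homogeneous actions, so the $\Gamma_J^0$-structure on $C$ and the isomorphisms in $(ii)$ and $(iii)$ are well defined up to $\Gamma_J^0$-equivariant isomorphism, which is all the statement requires.
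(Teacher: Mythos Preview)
Your proof is correct and follows essentially the same approach as the paper: both reduce to Proposition~\ref{fibers} and to the identification $F_{y_0}\cong\Gamma_J/H$, from which the connected components are recognized as the $\Gamma_J^0$-orbits and hence as copies of $\Gamma_J^0/(\Gamma_J^0\cap H)$. The paper's own proof is very terse (it simply says the argument is analogous to that of Proposition~\ref{fibers} and appeals to $F_{y_0}=\Gamma_J/H$), so your write-up supplies exactly the details the paper leaves implicit; the only minor remark is that your dimension-count step can be replaced by the more direct observation that the finitely many connected $\Gamma_J^0$-orbits are each open and closed, hence are the connected components.
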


\begin{proof}
Most of the proof  is absolutely analogous to the proof of Proposition~\ref{fibers}. The only statement which needs clarification is that any connected component of a fiber is isomorphic to $\Gamma_J^0\/(\Gamma_J^0\cap H)$.  By Proposition~\ref{fibers}  it is enough to check this for a connected component of a given fiber, say $F_{y_0}$. The rest easily follows from the fact that $F_{y_0}= \Gamma_J\/H$. 
\end{proof}

For a subcollection $J\subset \{1,\ldots, k \}$ we will call the number of connected components of a fiber of the Kodaira map $\Phi_J:G\/H\to \p(E_J^*)$ {\em the index} of $J$ and denote it by $ind(J)$. One can describe $ind(J)$ in a group theoretic way, this description clarifies the term ``index''.

Connected components of identity $\Gamma_J^0, H^0$ are normal subgroups of groups $\Gamma_J, H$ respectively. There exists a natural homomorphism between $i:H\/H^0\to\Gamma_J\/\Gamma_J^0$ induced by the inclusion of $H\subset \Gamma_J$. The homomorphism $i$ is well-defined since $H^0$ is a subgroup of $\Gamma_J^0$. It is easy to see that connected components of $ \Gamma_J\/H$ and hence of any other fiber of $\Phi_J$ are in one to one correspondence with elements of the coset set 
$$
(\Gamma_J\/\Gamma_J^0)\Big/i(H\/H^0),
$$
i.e. $ind(J)$ is equal to the index of $i(H\/H^0)$ in $\Gamma_J\/\Gamma_J^0$.

\begin{Theorem}\label{mainhom}
Let $X=G\/H$ be a homogeneous space and let $E_1,\ldots,E_k$ be $G$ - invariant linear systems on $X$, with the essential subcollection $J\subset\{1,\ldots,k\}$. Then the zero set $X_{\bf s}$ for generic ${\bf s}\in R_\cE$ is a disjoint union of $ind(J)$ subvarieties $Y_1,\ldots, Y_{ind(J)}$. Moreover, for any $i=1,\ldots, ind(J)$ $Y_i$ is a generic zero set of a collection of linear series isomorphic to $\cE_{J^c}=(E_i)_{i\notin J}$ restricted to $\Gamma_J^0\/(\Gamma_J^0\cap H)$.
\end{Theorem}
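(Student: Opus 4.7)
The plan is to combine Theorem~\ref{maingen} with the structural results of Propositions~\ref{fibers} and~\ref{concompfibre} about fibers of the Kodaira map in the equivariant setting, and interpret the index $ind(J)$ as counting connected components.

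First I would invoke Theorem~\ref{maingen}: for a generic consistent system ${\bf s}\in R_\cE$, the zero set $X_{\bf s}$ is the generic zero set of the collection $\cE_{J^c}=(E_i)_{i\notin J}$ restricted to a single fiber $F$ of the Kodaira map $\Phi_J:X\to \p(E_J^*)$. This immediately reduces the problem from the whole homogeneous space to one fiber of $\Phi_J$.

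Next I would analyze this fiber using Proposition~\ref{fibers}: since the $E_i$ are $G$-equivariant, the fiber $F$ carries a transitive $\Gamma_J$-action, and is isomorphic as a $\Gamma_J$-variety to $\Gamma_J/H$. Because $F$ is smooth and homogeneous, its irreducible and connected components coincide, and by the group-theoretic description given in the paragraph preceding the theorem, the number of them is exactly $ind(J)$. Label these components $C_1,\ldots,C_{ind(J)}$; by Proposition~\ref{concompfibre}(ii) each $C_i$ is isomorphic to $\Gamma_J^0/(\Gamma_J^0\cap H)$ as a $\Gamma_J^0$-variety, and by (iii) the restriction of each $E_j$ (with $j\notin J$) to $C_i$ forms a $\Gamma_J^0$-equivariant linear series whose associated pair $(C_i,E_j|_{C_i})$ is independent of $i$ up to isomorphism.

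Setting $Y_i := X_{\bf s}\cap C_i$, the zero set on the fiber is the disjoint union $Y_1 \sqcup \cdots \sqcup Y_{ind(J)}$, and each $Y_i$ is the zero set on $C_i$ of the restricted system. Since the restriction to a generic fiber $F$ of a generic member of $\cE_{J^c}$ remains generic among sections of the restricted collection (the restriction map from $E_j$ to sections on $C_i$ is $G$-equivariant, so its image is a well-defined linear series and a generic section of $E_j$ restricts to a generic section of the image), each $Y_i$ is the generic zero set of the collection isomorphic to $\cE_{J^c}$ restricted to $\Gamma_J^0/(\Gamma_J^0\cap H)$, as claimed.

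The main subtlety I expect is the genericity transfer in the last step: one must check that specializing to a generic fiber and then passing to one connected component does not destroy the ``generic'' property of the restricted sections. This follows because the restriction map $E_j\to H^0(C_i,\cL_j|_{C_i})$ is a linear surjection onto its image (a $\Gamma_J^0$-invariant linear series), so a Zariski-open condition on sections of the image pulls back to a Zariski-open condition on $E_j$, and this can be arranged uniformly over a Zariski-open set of fibers using Proposition~\ref{concompfibre}(iii) which guarantees that the pairs $(C_i,E_j|_{C_i})$ are isomorphic for all $i$ and all fibers.
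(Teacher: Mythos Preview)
Your proposal is correct and follows essentially the same route as the paper: invoke Theorem~\ref{maingen} to reduce to a single fiber of $\Phi_J$, then use Proposition~\ref{concompfibre} to split that fiber into $ind(J)$ isomorphic connected components with isomorphic restricted linear series. The paper's own proof is in fact just these two sentences; your version is more explicit, and in particular you flag the genericity-transfer issue that the paper leaves implicit. One small slip: the restriction map $E_j\to H^0(C_i,\cL_j|_{C_i})$ is $\Gamma_J^0$-equivariant, not $G$-equivariant, but this does not affect your argument.
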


\begin{proof}
By Theorem~\ref{maingen} the zero set $Z_{\bf s}$ of a generic consistent system given by ${\bf s}\in R_\cE$ is the zero set of a generic system $\cE_{J^c}$ restricted to the fiber of the Kodaira map $\Phi_J$. But by Proposition~\ref{concompfibre} connected components with the restrictions of the collection $\cE_{J^c}$ to them are isomorphic.
\end{proof}


\subsection{Linear series in spherical varieties} \label{Sphere}  In this subsection we will study $G$-invariant linear series on spherical varieties. A homogeneous space $G\/H$ of a reductive group $G$ is called {\it spherical} if some (and hence any) Borel subgroup $B\subset G$ has an open dense orbit in $G\/H$. Starting from this point a group $G$ will assumed to be reductive and a homogeneous space $G\/H$  will assumed to be spherical.

Any $G$-invariant linear series $V$ on $G\/H$ is a representation of a reductive group $G$, therefore it is a direct sum of irreducible representations:
$$
V= \bigoplus_\lambda V_\lambda.
$$
It is well known that the decomposition above is multiplicity free, that is each irreducible representation appears at most once in it. Indeed, let $V_\lambda$ and $V'_\lambda$ be two different irreducible representations with the same highest weight appearing in decomposition of $V$, and let $s$ and $s'$ be highest weight vectors in $V_\lambda$ and $V'_\lambda$ respectively. In particular, both $s$ and $s'$ are $B$-eigensections of weight $\lambda$ of some $G$-linearised line bundle $\cL$. Therefore the ratio $s/s'$ is a $B$-invariant rational function on $G\/H$. Since $X$ has an open $B$-orbit we conclude that $s/s'$ is constant, so $V_\lambda = V'_\lambda$. The set of weights appearing in the decomposition of $V$ is called {\em $G$-spectrum} of $V$ and is denoted by $\Spec_G(V)$. The pair of a $G$-linearized line bundle $\cL$ and a finite subset $A$ of $\Spec_G(H^0(X,\cL))$ determines uniquely a $G$-invariant linear series on $X$.

The main result of this subsection is Theorem~\ref{mainsph} which realizes a generic non-empty zero set defined by overdetermined  collection of linear series, as a zero set, defined by generically consistent collection. In \cite{KK16}, for a collection of linear series $\cE=(E_1,\ldots,E_k)$ and for a generic choice of ${\bf s}\in \E$, some discrete invariants of the zero set $Z_{\bf s}$ were computed in terms of combinatorics of the Newton-Okounkov polytope. The Newton-Okounkov polytope is constructed as a polytope fibered over moment polytope with string polytopes as fibers. The construction of Newton-Okounkov polytope depends only on $G$-spectra of linear series $E^k$ (for more details see \cite{KK16}). These results together with Theorem~\ref{mainsph} provide a strategy for computing discrete invariants of generic non-empty zero set defined by overdetermined  collection of linear series.

\begin{Lemma}\label{stillsph}
Let $H$ be a spherical subgroup of a reductive group $G$, let $K$ be a connected reductive subgroup of $G$ which contains $H$ (i.e $H\subset K\subset G$). Then $H^0$ is a spherical subgroup of $K$.
\end{Lemma}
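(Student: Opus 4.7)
The plan is to produce a Borel subgroup $B_K$ of $K$ with an open orbit on $K/H^0$. The key idea is to restrict an open Borel orbit on $G/H^0$ down to $K$, obtaining an open orbit of a connected solvable subgroup of $K$; any Borel of $K$ containing that solvable group then has an open orbit on $K/H^0$.

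First I would reduce from $H$ to $H^0$ at the level of $G$. Since $H^0$ is normal of finite index in $H$, the natural map $G/H^0 \to G/H$ is a finite surjective $G$-equivariant morphism of irreducible varieties. If a Borel $B \subset G$ has an open orbit on $G/H$, its preimage in $G/H^0$ is open, $B$-invariant, and of dimension $\dim G/H^0$, so it contains a $B$-orbit of full dimension, which is therefore open. Hence $H^0$ is spherical in $G$, and after conjugating the Borel if necessary we may choose $B \subset G$ such that $B H^0$ is open in $G$.

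The main step is the identity
$$
BH^0 \cap K \;=\; (B \cap K)\, H^0,
$$
which uses the hypothesis $H^0 \subset H \subset K$: if $k = bh$ with $k \in K$, $b \in B$, $h \in H^0$, then $b = k h^{-1} \in K$, so $b \in B \cap K$. The intersection $BH^0 \cap K$ is open in $K$ (as the intersection of an open subset of $G$ with the closed subvariety $K$) and contains $e$, so it has dimension $\dim K$. Thus $(B \cap K) H^0$ is open of full dimension in $K$, and writing $B \cap K$ as a finite disjoint union of translates of its identity component $(B \cap K)^0$ shows that $(B \cap K)^0 H^0$ also has dimension $\dim K$.

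To conclude, the orbit $(B \cap K)^0 \cdot eH^0 \subset K/H^0$ is irreducible, locally closed, and of dimension $\dim K - \dim H^0 = \dim(K/H^0)$; since $K/H^0$ is irreducible, it must be open. Choose any Borel $B_K \subset K$ containing the connected solvable subgroup $(B \cap K)^0$. Then $B_K \cdot eH^0 \supset (B \cap K)^0 \cdot eH^0$ is open in $K/H^0$, so $B_K$ has an open orbit on $K/H^0$; that is, $H^0$ is spherical in $K$. The delicate point is the identity $BH^0 \cap K = (B \cap K) H^0$, which converts ambient sphericality into sphericality of the subpair and is precisely where the containment $H \subset K$ is used; the rest is standard dimension counting together with the fact that every connected solvable subgroup of $K$ lies in a Borel subgroup of $K$.
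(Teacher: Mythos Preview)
Your proof is correct and follows essentially the same route as the paper's. The key identity $BH^0 \cap K = (B\cap K)H^0$ you isolate is exactly the paper's claim that $(B\cdot x)\cap(K/H)=(B\cap K)\cdot x$, rephrased at the group level rather than on the homogeneous space; you then carry out the same passage from $(B\cap K)$ to $(B\cap K)^0$ and then to a Borel $B_K\supset(B\cap K)^0$, with somewhat more explicit dimension bookkeeping than the paper provides.
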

\begin{proof}
First notice that if $H$ is spherical subgroup of $G$ then such is $H^0$ so without loss of generality we can assume that $H=H^0$.

Now consider a point  $x\in K\/H\subset G\/H$, there exists a Borel subgroup $B$ of $G$ such that $B\cdot x$ is a dense in $G\/H$.  Let $U$ be the intersection of the dense orbit $B\cdot x$ with $K\/H$, in other words $U=(B\cap K)\cdot x$. Note that $(B\cap K)$ is consistent and therefore $(B\cap K)^0$ is contained in some Borel subgroup $B_K$ of $K$. From the other hand, since $B\cdot x$  is open in $G\/H$, $U$ is open and dense in $K\/H$, and since $K\/H$ is irreducible the orbit $(B\cap K)^0\cdot x$ is dense in $K\/H$. We conclude that the orbit $B_K\cdot x$ is dense in $K\/H$ as $B_K$ contain $(B\cap K)^0$.
\end{proof}

\begin{Proposition}\label{sphfiber}
Let $E_1,\ldots, E_k$ be $G$-invariant linear systems on a spherical homogeneous space $G\/H$ and $\Gamma_J$ be a reductive group for some $J\subset \{1,\ldots,k\}$. Then the connected components of fibers of the Kodaira map $\Phi_J$ are isomorphic spherical homogeneous spaces. Moreover, for any  $G$-invariant linear series $V$,  all restrictions of $V$ to connected components of fibers are isomorphic.
\end{Proposition}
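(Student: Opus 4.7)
The plan is to reduce the proposition to Proposition~\ref{concompfibre} combined with Lemma~\ref{stillsph}. Indeed, Proposition~\ref{concompfibre}(ii) already identifies every connected component of every fiber of $\Phi_J$ with the homogeneous $\Gamma_J^0$-space $\Gamma_J^0/(\Gamma_J^0\cap H)$, so the ``isomorphic'' half of the first claim and the entire second claim about restrictions of $V$ are immediate from parts (ii) and (iii) of that proposition. Thus all that remains is the sphericity statement: I must check that $\Gamma_J^0/(\Gamma_J^0\cap H)$ is a spherical $\Gamma_J^0$-homogeneous space.

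First I would note the chain $H^0\subset \Gamma_J^0\subset G$. The inclusion $H\subset \Gamma_J$ holds because $H$ fixes $x_0$ and $\Phi_J$ is $G$-equivariant, and passing to identity components (both $H^0$ and $\Gamma_J^0$ are the connected components of the identity) gives $H^0\subset \Gamma_J^0$. Since $\Gamma_J$ is assumed reductive, its identity component $\Gamma_J^0$ is connected reductive. Sphericity is insensitive to passage to the identity component of the stabilizer, so $H^0$ is spherical in $G$. These are exactly the hypotheses of Lemma~\ref{stillsph}, applied to the triple $H^0\subset \Gamma_J^0\subset G$; the lemma then yields that $H^0$ is spherical in $\Gamma_J^0$.

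Finally, I would pass from $H^0$ to $\Gamma_J^0\cap H$. Because $\Gamma_J^0$ is open in $\Gamma_J$ and $H^0$ is open in $H$, the intersection $\Gamma_J^0\cap H$ has $H^0$ as its identity component. Therefore $\Gamma_J^0/(\Gamma_J^0\cap H)$ is a quotient of $\Gamma_J^0/H^0$ by the finite group $(\Gamma_J^0\cap H)/H^0$, and the dense orbit of a Borel subgroup of $\Gamma_J^0$ on $\Gamma_J^0/H^0$ descends to a dense Borel orbit on the finite quotient, giving sphericity.

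The main obstacle — if one can call it that — is really only a bookkeeping point: one must correctly identify $(\Gamma_J^0\cap H)^0=H^0$ and then verify that sphericity is preserved under the resulting finite quotient. Everything else is a direct citation of Proposition~\ref{concompfibre} and Lemma~\ref{stillsph}, so I expect the proof to be quite short.
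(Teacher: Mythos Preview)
Your proposal is correct and follows essentially the same route as the paper: invoke Proposition~\ref{concompfibre}(ii)--(iii) for the isomorphism claims and then apply Lemma~\ref{stillsph} with $K=\Gamma_J^0$ to obtain sphericity. You are in fact slightly more careful than the paper, which applies Lemma~\ref{stillsph} directly without spelling out the passage from $H^0$ to $\Gamma_J^0\cap H$; your explicit identification $(\Gamma_J^0\cap H)^0=H^0$ and the finite-quotient observation fill exactly that small gap.
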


\begin{proof}
By part $(ii)$ of Proposition~\ref{concompfibre} connected components of the fibers of the Kodaira map $\Phi_J$ are  homogeneous spaces which are isomorphic to $\Gamma_J^0\/(\Gamma_J^0\cap H)$, and since $\Gamma_J^0$ is reductive  by Lemma~\ref{stillsph} they are also spherical. The last statement of the above proposition is identical to part $(iii)$ of Proposition~\ref{concompfibre}. 
\end{proof}

\begin{Theorem}\label{mainsph}
Let $\cE=(E_1,\ldots, E_k)$ and $G\/H$ be as before. Let also $J$ be the essential subcollection of $\cE$ such that $\Gamma_J$ is a reductive group. Then for the generic system ${\bf s}\in R_\E$ the zero set $Z_{\bf s}$ is a disjoint union of $ind(J)$ varieties $Y_1,\ldots, Y_{ind(J)}$. Moreover, subvarieties $Y_j$'s are defined by isomorphic $\Gamma_J^0$-invariant collections of linear series on the spherical variety $\Gamma_J^0\/(\Gamma_J^0\cap H)$.
\end{Theorem}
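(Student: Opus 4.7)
The plan is to deduce Theorem~\ref{mainsph} directly from Theorem~\ref{mainhom} together with Proposition~\ref{sphfiber}; no new machinery is needed beyond assembling these two results.

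First I would invoke Theorem~\ref{mainhom}, whose hypotheses do not require sphericity or reductivity: it only needs $X = G/H$ to be a $G$-homogeneous space carrying a $G$-invariant overdetermined collection $\cE$ with essential subcollection $J$. Its conclusion immediately supplies both the decomposition $Z_{\bf s} = Y_1 \sqcup \cdots \sqcup Y_{ind(J)}$ for generic ${\bf s}\in R_\cE$ and the fact that each $Y_i$ is realised as a generic zero set of a collection isomorphic to $\cE_{J^c} = (E_i)_{i \notin J}$ restricted to the model fiber $\Gamma_J^0/(\Gamma_J^0 \cap H)$.

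All that remains is to upgrade the description of the ambient space from ``$\Gamma_J^0$-homogeneous'' to ``spherical $\Gamma_J^0$-homogeneous'' and to confirm that the restricted collections on $\Gamma_J^0/(\Gamma_J^0 \cap H)$ are $\Gamma_J^0$-invariant. Both statements are exactly the content of Proposition~\ref{sphfiber}, whose hypotheses---sphericity of $G/H$ and reductivity of $\Gamma_J$---are precisely those assumed in Theorem~\ref{mainsph}. Pasting Proposition~\ref{sphfiber} onto the output of Theorem~\ref{mainhom} then yields the full statement.

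I do not anticipate a substantive obstacle: the argument is pure assembly. The only delicate point, already handled inside Proposition~\ref{sphfiber} via Lemma~\ref{stillsph}, is transferring sphericity of $H$ in $G$ to sphericity of $(\Gamma_J^0 \cap H)^0$ in $\Gamma_J^0$, using that $\Gamma_J^0$ is connected reductive as the identity component of the reductive group $\Gamma_J$. Once that transfer is in hand, the theorem follows formally by combining the decomposition from Theorem~\ref{mainhom} with the sphericity and equivariance data from Proposition~\ref{sphfiber}.
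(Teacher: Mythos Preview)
Your proposal is correct and matches the paper's own proof essentially verbatim: the paper likewise invokes Theorem~\ref{mainhom} for the decomposition into $ind(J)$ pieces defined by isomorphic $\Gamma_J^0$-invariant collections, and then cites Proposition~\ref{sphfiber} to conclude that the connected components of the fibers of $\Phi_J$ are spherical and isomorphic to $\Gamma_J^0/(\Gamma_J^0\cap H)$.
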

\begin{proof}
By Theorem~\ref{mainhom} the zero set $Z_{\bf s}$ of a generic consistent system ${\bf s}\in R_\cE$ is a union of $ind(J)$ subvarieties $Y_1,\ldots, Y_{ind(J)}$, with $Y_j$'s  defined by isomorphic $\Gamma_J^0$-invariant collections of linear series on connected components of Kodaira map $\Phi_J$. But connected components of Kodaira map $\Phi_J$ are spherical and isomorphic to $\Gamma_J^0\/(\Gamma_J^0\cap H)$ by Proposition~\ref{sphfiber}.
\end{proof}

\begin{Corollary}
In the situation above, the arithmetic genus $g(Z_{\bf s})$ of the generic non-empty zero set  $Z_{\bf s}$ could be computed as
$$
g(Z_{\bf s})= ind(J)g(Y_i),
$$
for any $i=1,\ldots, ind(J)$, and can be computed in terms combinatorics of Newton-Okounkov polytopes. If in addition all linear series from collection $\cE_{J^c}$ restricted to $\Gamma_J^0\/(\Gamma_J^0\cap H)$  are injective, mixed Hodge numbers $h^{p,0}(Z_{\bf s})$  of the generic non-empty zero set  $Z_{\bf s}$ can be computed as
$$
 h^{p,0}(Z_{\bf s})=ind(J)  h^{p,0}(Y_i),
$$
for any $i=1,\ldots, ind(J)$, and can be computed in terms combinatorics of Newton-Okounkov polytopes.
\end{Corollary}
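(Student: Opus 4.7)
The plan is to deduce the corollary as a straightforward consequence of Theorem~\ref{mainsph} together with the additivity behavior of the relevant invariants over disjoint unions, and then invoke the combinatorial machinery of \cite{KK16} on each component separately.

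First I would apply Theorem~\ref{mainsph} to write $Z_{\bf s} = Y_1 \sqcup \cdots \sqcup Y_{ind(J)}$, where the $Y_i$ are pairwise isomorphic and each $Y_i$ is the zero set of a generically consistent $\Gamma_J^0$-invariant collection of linear series on the spherical homogeneous space $\Gamma_J^0/(\Gamma_J^0 \cap H)$. The isomorphism between the pairs (component, restricted collection) is provided by parts $(ii)$ and $(iii)$ of Proposition~\ref{concompfibre}, so any invariant of the pair that only depends on the isomorphism class takes the same value on every $Y_i$.

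For the arithmetic genus, the Euler characteristic of the structure sheaf is additive over disjoint unions, so $\chi(\mathcal{O}_{Z_{\bf s}}) = ind(J)\cdot \chi(\mathcal{O}_{Y_1})$; translating via the definition of arithmetic genus used in \cite{KK16} yields the claimed multiplicative formula $g(Z_{\bf s}) = ind(J)\cdot g(Y_1)$. For the mixed Hodge numbers $h^{p,0}$, the sheaf $\Omega^p$ on a disjoint union is the direct sum of the $\Omega^p$'s on the components, whence $h^{p,0}(Z_{\bf s}) = \sum_i h^{p,0}(Y_i) = ind(J)\cdot h^{p,0}(Y_1)$. In both cases, since the defining collection of each $Y_i$ is a generically consistent $\Gamma_J^0$-invariant collection on the spherical variety $\Gamma_J^0/(\Gamma_J^0 \cap H)$, I can invoke the formulas of \cite{KK16} to express $g(Y_1)$ and (under the injectivity hypothesis on the linear series of $\cE_{J^c}$ restricted to $\Gamma_J^0/(\Gamma_J^0 \cap H)$) the numbers $h^{p,0}(Y_1)$ in terms of the Newton--Okounkov polytope of the reduced collection.

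The main obstacle is bookkeeping: one has to verify that the hypotheses of the relevant theorems from \cite{KK16} are actually satisfied for the reduced data on $\Gamma_J^0/(\Gamma_J^0\cap H)$ (in particular, for the Hodge number formula this is precisely what the injectivity assumption guarantees), and that the Newton--Okounkov polytope of the $\Gamma_J^0$-invariant collection on the smaller spherical homogeneous space is combinatorially accessible from the original $G$-invariant data. Once this identification is made, the corollary follows at once by combining the multiplicativity by $ind(J)$ with the combinatorial formulas of \cite{KK16} applied to a single component $Y_i$.
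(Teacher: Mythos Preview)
Your proposal is correct and follows essentially the same approach as the paper: the paper's proof is a one-liner stating that the corollary follows immediately from Theorem~\ref{mainsph} together with Theorems~1 and~2 of \cite{KK16}. You have simply spelled out the implicit steps---the additivity of $\chi(\mathcal{O})$ and of $h^{p,0}$ over disjoint unions, and the fact that the isomorphic components all carry the same invariants---which the paper leaves to the reader.
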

\begin{proof}
Corollary follows immediately from Theorem~\ref{mainsph} and Theorems 1 and 2 of \cite{KK16}.
\end{proof}

Theorem~\ref{mainsph} involves condition on $\Gamma_J$ to be reductive, the following theorem provides a geometric criterion for a subgroup of a reductive group to be reductive.

\begin{Theorem}[\cite{Tim}]\label{reductive}
 Let $G$ be a reductive algebraic group then a closed subgroup $H$ of $G$ is reductive if and only if the coset space $G\/H$ is an affine algebraic variety.
\end{Theorem}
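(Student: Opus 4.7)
The plan is to prove the two implications separately. For the direction ``$H$ reductive $\Rightarrow G\/H$ affine'', I would use geometric invariant theory: view $G$ as an affine $H$-variety where $H$ acts by right multiplication. Since $H$ is reductive, the invariant ring $\mathcal{O}(G)^H$ is finitely generated (by Hilbert--Nagata), so the categorical quotient $\Spec\,\mathcal{O}(G)^H$ is affine. The $H$-orbits on $G$ are precisely the right cosets of $H$, which are all closed and of the same dimension, so the categorical quotient coincides with the geometric quotient $G\/H$; in particular $G\/H$ is affine.

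For the converse ``$G\/H$ affine $\Rightarrow H$ reductive'', the strategy is to reduce to the Matsushima--Kempf orbit closedness criterion, which states that for a reductive group $G$ acting on an affine variety $X$ the orbit $G\cdot x$ is closed in $X$ if and only if the stabilizer $G_x$ is reductive. Granting this criterion, I would realize $G\/H$ as a closed $G$-equivariant subvariety of a finite-dimensional $G$-module. Since $\mathcal{O}(G\/H)$ is a locally finite $G$-module, one can choose a finite-dimensional $G$-stable subspace $W\subset \mathcal{O}(G\/H)$ which generates $\mathcal{O}(G\/H)$ as an algebra; this gives a closed $G$-equivariant embedding $G\/H\hookrightarrow W^*$. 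The image $v$ of the basepoint $eH$ then satisfies $G_v=H$, and the orbit $G\cdot v\cong G\/H$ is closed in $W^*$. The criterion immediately yields reductivity of $H$.

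The main obstacle is establishing the orbit closedness criterion itself, which is the real content of Matsushima's theorem. The nontrivial implication ``closed orbit implies reductive stabilizer'' can be proved as follows: if the unipotent radical $R_u(G_v)$ were nontrivial, a Hilbert--Mumford type deformation along a suitably chosen one-parameter subgroup normalizing $R_u(G_v)$ produces a point in $\overline{G\cdot v}\setminus G\cdot v$, contradicting closedness. Alternatively one can invoke Luna's slice theorem to reduce to a linear $G_v$-representation on a normal slice and then apply the characterization of reductivity via complete reducibility. Verifying this step rigorously is the technical heart of the argument; once it is in place, the stated equivalence follows from the two paragraphs above.
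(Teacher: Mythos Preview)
The paper does not give a proof of this statement at all: it is quoted as a classical result (Matsushima's criterion) with a citation to \cite{Tim}, and is used only as a black box to justify the hypothesis ``$\Gamma_J$ reductive'' in Theorem~\ref{mainsph}. There is therefore no proof in the paper to compare your attempt against.

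On the substance of your sketch: the forward implication is correct and is the standard GIT argument. For the converse, your reduction to a closed orbit in a linear $G$-representation is fine, but the two routes you propose to finish are problematic. Invoking Luna's \'etale slice theorem is circular in most formulations, since the existence of an \'etale slice takes reductivity of the isotropy group as a hypothesis; where Luna's theory does yield reductivity of $G_v$, it does so by first proving Matsushima's criterion, so you are just citing the result again. Your Hilbert--Mumford style argument is not the standard proof and, as written, does not explain why nontriviality of $R_u(G_v)$ forces a one-parameter limit outside $G\cdot v$; making this precise is essentially the whole difficulty. The classical proofs over $\C$ (Matsushima, Onishchik, Bialynicki--Birula) go through compact real forms or Weil's criterion for affineness, and the purely algebraic proof (Richardson, relying on Haboush's theorem) is substantially more involved than your outline suggests.
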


\begin{Rem}
The condition on $\Gamma_J$ to be reductive is quite restrictive. However, for any triple $H\subset K\subset G$ where $H$ is a spherical subgroup of $G$ and $K$ is reductive, one can realize $K$ as $\Gamma_J$ for some collection of $G$-invariant linear series on $G/H$. Indeed, let $\pi:G/H \to G/K$ be natural projection and let $\cE=(E_1,\ldots, E_k)$ be an injective (i.e. such that  $E_J$ is very ample) essential collection of  $G$-invariant linear system on $G/K$. Then the pullback collection $\pi^*\cE=(\pi^*E_1,\ldots,\pi^* E_k)$ is an essential collection of linear series on $G/H$ with $\Gamma_J=K$. The classification of all such triples $H\subset K\subset G$ follows from several works. Following ideas from \cite{Lun} in \cite{Los} spherical subgroups were classified. The classification of spherical reductive subgroups was obtained in  \cite{RedSph}. And finally, the containment relation between spherical subgroups  was obtained in \cite{Hof18}. 
\end{Rem}

 \subsection{Example}\label{ex}

In this subsection we will give a concrete example of an application of Theorem~\ref{mainsph}. We will work with homogeneous space $\GL_n/U$. Let $\cE=(E_1,E_2,E_3)$ be a collection of linear series with
$$
E_1=E_2= Span(c, {\det}^k),
$$
where $c$ is a constant function, ${\det}^k(g\cdot U)= \det(g)^k$, and $E_3$ is a very ample linear series.

The minimal defect of $\cE$ is $-1$, the essential subcollection is $J=\{1, 2\}$, and $\Gamma_J=\SL_n[k]$, where 
$$
\SL_n[k]=\left\{ g\in \GL_n\,| \,det(g)^k=1 \right\}.
$$
Therefore, a connected component of a fiber of the Kodaira map $\Phi_J$ is isomorphic to $\SL_n/U$ and the number of connected components of a fiber is $ind(J)=k$. 

It follows by Theorem~\ref{mainsph} that the generic non-empty zero set $Z_{\bf s}$ of a system $s_1=s_2=s_3=0$ is a union of $k$ subvarieties $Y_1,\ldots, Y_k$ such that each of $Y_i$ is a hypersurface in $\SL_n/U$ cut out by a generic section $s\in E_3|_{\SL_n/U}$. In particular geometric genus and mixed Hodge numbers $h^{p,0}$ of $Z_{\bf s}$ could be computed using results of \cite{KK16}.

\section{Linear series on Algebraic Torus}\label{torus}

In this section we study equivariant linear series on $(\C^*)^n$. The results of this section were previously published in~\cite{Mon17} and included here for completeness of exposition. We will start with some notations and definitions. With a Laurent polynomial $f$  in $n$ variables one can associate its support $supp(f)\subset \Z^n$ which is the set of exponents of monomials having non-zero coefficient in $f$ and its Newton polyhedra $\Delta(f)\subset \R^n$ which is the convex hull of $supp(f)$ in $\R^n$.  For a finite set $A\subset \Z^n$, let $E_A$ be a vector space of Laurent polynomials with support in $A$. It is easy to see that any equivariant linear series on $(\C^*)^n$ is of the form $E_A$ for some $A$ and that $A$ is a $(\C^*)^n$-spectrum of $E_A$.

Any connected algebraic subgroup of $(\C^*)^n$ is an algebraic torus, in particular such is group $\Gamma_J^0$. Therefore, Theorem~\ref{mainsph} has a much nicer form in the case of equivariant linear series on $(\C^*)^n$.
 
\begin{Theorem}\label{maintoric}
Let $\cA=(A_1,\ldots,A_{k})$ be a collection of finite subsets of $\Z^n$ and $E_i=E_{A_i}$ be corresponding linear systems on $(\C^*)^n$. Let $J$ the essential subcollection of $E_1,\ldots,E_k$. Then for the generic consistent system ${\bf s}\in R_\cE$ the zero set $Z_{\bf s}$ is a disjoint union of $ind(J)$ subvarieties each of which is defined by a generic system given by isomorphic collections of linear systems on algebraic torus.
\end{Theorem}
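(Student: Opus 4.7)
The plan is to recognize Theorem~\ref{maintoric} as the specialization of Theorem~\ref{mainsph} to the case $G=(\C^*)^n$ acting on itself by translations, with stabilizer $H=\{e\}$. The main task is therefore to verify that the hypotheses of Theorem~\ref{mainsph} are automatically satisfied in this setting and to identify the fiber $\Gamma_J^0/(\Gamma_J^0\cap H)$ concretely as an algebraic torus.

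First I would set up the translation dictionary: the algebraic torus $(\C^*)^n$ is a reductive group, it is spherical as a homogeneous space over itself (a Borel subgroup is the full torus, which obviously has an open orbit), and an equivariant line bundle together with a finite dimensional $(\C^*)^n$-invariant subspace of its sections is precisely the datum of a Laurent polynomial linear series $E_A$ with spectrum $A\subset\Z^n$. Thus the collection $\cE=(E_{A_1},\ldots,E_{A_k})$ fits the framework of Section~\ref{Sphere}.

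Next I would check that for every $J\subset\{1,\ldots,k\}$ the stabilizer $\Gamma_J\subset(\C^*)^n$ of the point $\Phi_J(x_0)$ is reductive. Since $\Gamma_J$ is by construction a closed subgroup of an algebraic torus, its identity component $\Gamma_J^0$ is a closed connected subgroup of $(\C^*)^n$, hence itself an algebraic subtorus; in particular $\Gamma_J$ is reductive (equivalently, by Theorem~\ref{reductive}, the quotient $(\C^*)^n/\Gamma_J$ is affine). Therefore the reductivity hypothesis needed to invoke Theorem~\ref{mainsph} is free of charge here, so the theorem applies and yields a decomposition of the generic consistent zero set $Z_{\bf s}$ as a disjoint union of $\operatorname{ind}(J)$ subvarieties $Y_1,\ldots,Y_{\operatorname{ind}(J)}$ defined by isomorphic $\Gamma_J^0$-invariant collections of linear series on the homogeneous space $\Gamma_J^0/(\Gamma_J^0\cap H)$.

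Finally, since $H=\{e\}$ we have $\Gamma_J^0\cap H=\{e\}$, so the relevant homogeneous space is simply $\Gamma_J^0$, which as noted is an algebraic torus. Hence each $Y_i$ is carved out of an algebraic torus by a generic system in an equivariant collection of linear series, and the $Y_i$ are pairwise isomorphic as pairs (variety, collection of linear series). Since everything reduces to Theorem~\ref{mainsph} together with the elementary observation that closed connected subgroups of tori are tori, there is no substantial obstacle; the only point requiring any care is keeping track of the index $\operatorname{ind}(J)$, which counts the connected components of a fiber of $\Phi_J$ and is controlled by the cokernel of the inclusion of the character lattice of $(\C^*)^n/\Gamma_J^0$ into the character lattice of $(\C^*)^n/\Gamma_J$, i.e.\ is combinatorially determined by the sublattice of $\Z^n$ spanned by the differences of points in $\bigcup_{i\in J}A_i$.
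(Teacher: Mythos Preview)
Your proposal is correct and follows exactly the paper's approach: the paper does not even give a separate proof of Theorem~\ref{maintoric}, but simply precedes its statement with the remark that any connected algebraic subgroup of $(\C^*)^n$ is an algebraic torus (so $\Gamma_J^0$ is a torus and $\Gamma_J$ is reductive), whence Theorem~\ref{mainsph} applies directly with $H=\{e\}$. Your write-up just makes this implicit reasoning explicit, and the extra remark at the end about the combinatorial description of $ind(J)$ anticipates Theorem~\ref{toric}(ii) but is not needed for the present statement.
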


In the toric case Theorem~\ref{maintoric} can be formulated much more concretely. This will allow us to find discrete invariants of a generic non-empty zero set explicitly. We give an example of such result in Theorem~\ref{BKK}. To state a concrete version of Theorem~\ref{maintoric} we would need more notations. For a collection $\cA=(A_1,\ldots, A_k)$ of finite subsets of $\Z^n$ and subcollection $J$ let 

\begin{itemize}
\item $A_J$ be the Minkowski sum $\sum_{i\in J}A_i$;
\item $L(J)$ be a vector subspace of $\R^n$ parallel to the affine span of $A_J$ and $\pi_J:\R^n\to \R^n/L(J)$ be the natural projection;
\item $\Lambda(J) = L(J)\cap\Z^n$ the lattice of integral points in $L(J)$;
\item $G_J$ the group generated by all the differences of the form $(a-b)$ with $a,b\in A_i$ for any $i\in J$;
\item $ind(G_J)$ the index of $G_J$ in $\Lambda(J)$.
\end{itemize}

\begin{Theorem}\label{toric}
 Let $\cA=(A_1,\ldots,A_{k})$ be a collection of finite subsets of $\Z^n$ and $E_i=E_{A_i}$ be corresponding linear systems on $(\C^*)^n$, let also $J$ the essential subcollection of $E_1,\ldots,E_k$. Then\\
 (i) The defect of a collection of linear series $\Def(J)$ (in the sense of Definition~\ref{defK}) is equal to the defect of the vector subspaces $\Def(L(A_i))_{i\in J}$ (in the sense of Definition~\ref{defc});\\
 (ii) The number $ind(J)$ of connected components of a fiber of Kodaira map $\Phi_J$ is equal to the index $ind(G_J)$;\\
 (iii) for the generic consistent system ${\bf s}\in R_\cE$ the zero set $Z_{\bf s}$ is a disjoint union of $ind(J)$  subvarieties each of which is given by a generic system with the same supports $(\pi_J(\cA_i))_{i \notin J}$.
\end{Theorem}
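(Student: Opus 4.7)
The plan is to specialize Theorem~\ref{mainsph} to equivariant linear series on $(\C^*)^n$ by making every object of Section~\ref{hom} explicit at the identity element $x_0 = e$. Since $H = \{e\}$ is trivial, Proposition~\ref{concompfibre} reduces $ind(J)$ to $|\Gamma_J/\Gamma_J^0|$, and any algebraic subgroup of $(\C^*)^n$ is automatically a product of a subtorus and a finite group (hence reductive), so the hypotheses of Theorem~\ref{mainsph} are satisfied.

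For part (i), I would apply Proposition~\ref{defisdef} at $a = x_0$. A basis of $E_{A_i}$ is the monomial basis $\{x^\alpha : \alpha \in A_i\}$; identifying $T_{x_0}^*(\C^*)^n$ with $\C^n$ via the frame $d\log x_1, \dots, d\log x_n$ and fixing some $\alpha_0 \in A_i$, the codistribution $F_{E_i}^\vee(x_0)$ is spanned by the differentials $d(x^\alpha / x^{\alpha_0})|_{x_0}$, which under this identification are simply the vectors $\alpha - \alpha_0$ for $\alpha \in A_i$. Hence $F_{E_i}^\vee(x_0)$ is precisely the vector subspace $L(A_i)\subset \C^n$, and Proposition~\ref{defisdef} immediately yields the equality of defects.

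For part (ii), I would compute $\Gamma_J$ by direct inspection. In the monomial basis of $E_J$, the Kodaira map reads $\Phi_J(t) = [t^a]_{a \in A_J}$, so $\Gamma_J$ is the annihilator in $(\C^*)^n$ of the subgroup of $\Z^n$ generated by all differences $a - b$ with $a, b \in A_J$, and this subgroup is exactly $G_J$. Applying $\Hom(-,\C^*)$ to the short exact sequence
\[
0 \to \Lambda(J)/G_J \to \Z^n/G_J \to \Z^n/\Lambda(J) \to 0,
\]
whose first term is finite and whose third is torsion-free of rank $n - \dim L(J)$, produces a short exact sequence of algebraic groups
\[
1 \to \Hom(\Z^n/\Lambda(J), \C^*) \to \Gamma_J \to \Lambda(J)/G_J \to 1,
\]
in which the kernel is a connected torus and hence coincides with $\Gamma_J^0$. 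The component group is $\Lambda(J)/G_J$ of order $ind(G_J)$, giving $ind(J) = ind(G_J)$.

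Finally, for part (iii), I would combine Theorem~\ref{mainsph} with an explicit description of the restricted linear series. The character lattice of the subtorus $\Gamma_J^0 \subset (\C^*)^n$ is $\Z^n/\Lambda(J)$, and the monomial $x^a \in E_{A_i}$ restricts on $\Gamma_J^0$ to the character $\pi_J(a)$; consequently $E_{A_i}\big|_{\Gamma_J^0}$ is the equivariant linear series on $\Gamma_J^0$ with support $\pi_J(A_i)$. Theorem~\ref{mainsph} then decomposes $Z_{\mathbf{s}}$ into $ind(J)$ disjoint isomorphic copies of the generic zero set of the collection with supports $(\pi_J(A_i))_{i \notin J}$ on $\Gamma_J^0$. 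The main technical obstacle is keeping the three lattices $G_J \subset \Lambda(J) \subset \Z^n$ and their quotients straight, so that the abstract component count of Proposition~\ref{concompfibre} really matches the combinatorial index $ind(G_J)$; once this Pontryagin-dual bookkeeping is done, the three parts fall out from Proposition~\ref{defisdef}, the direct computation of $\Gamma_J$, and Theorem~\ref{mainsph}, respectively.
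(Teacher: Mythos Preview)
Your argument is correct. The paper itself does not give a proof here: it simply refers the reader to Section~4 of \cite{Mon17}, in particular the proof of Theorem~18 there, so there is no in-paper argument to compare against step by step. What you have done is precisely what the internal logic of the paper invites, namely to specialize the general machinery of Sections~\ref{consvar}--\ref{hom} to the case $G=(\C^*)^n$, $H=\{e\}$. Each of your three steps lines up with the corresponding general statement: part~(i) is Proposition~\ref{defisdef} at $a=e$ once you observe that $U=(\C^*)^n$ in the equivariant setting (the paper notes this explicitly just before the examples following the definition of $U$); part~(ii) is the Pontryagin-dual computation of $\Gamma_J$ together with the remark after Proposition~\ref{concompfibre} that $ind(J)=[\Gamma_J/\Gamma_J^0:i(H/H^0)]$, which collapses to $|\Gamma_J/\Gamma_J^0|$ since $H$ is trivial; and part~(iii) is Theorem~\ref{mainsph} combined with the identification of the character lattice of $\Gamma_J^0$ with $\Z^n/\Lambda(J)$. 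The only point worth making explicit in a final write-up is that the subgroup of $\Z^n$ generated by differences $a-b$ with $a,b\in A_J$ coincides with $G_J$ as defined in the paper (differences within each $A_i$), which is an easy check using the Minkowski-sum description of $A_J$.
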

\begin{proof}
For the proof see Section 4 of \cite{Mon17} and in particular proof of Theorem 18.
 \end{proof}
 
 The following Theorem is an example of the application of Theorem~\ref{toric}.
 \begin{Theorem}[\cite{Mon17} Theorem 20]\label{BKK}
Let $\cA_1,\ldots,\cA_{n+k}\subset \Z^n$ be such that $d(\cA)=-k$ and $J$ be the unique essential subcollection. Then the zero set $Y_{\bf f}$ of the generic consistent system has dimension 0, and the number of points in $Y_{\bf f}$ is equal to
 $$
 (n-\#J + k)! \cdot ind(J) \cdot Vol(\pi_J(\Delta_i)_{i\notin J}),
 $$
 where $\Delta_i$ is the convex hull of $\cA_i$ and $Vol$ is the mixed volume on $\R^n/L(J)$ normalized with respect to the lattice $\Z^n/\Lambda(J)$.
\end{Theorem}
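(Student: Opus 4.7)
The plan is to invoke Theorem~\ref{toric} to reduce the claim to the classical BKK Theorem applied on a quotient torus, and then to count.

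First, by part (iii) of Theorem~\ref{toric}, for generic consistent ${\bf f}\in R_\cE$ the zero set $Y_{\bf f}$ decomposes as a disjoint union of $ind(J)$ isomorphic pieces, each cut out by a generic system with supports $(\pi_J(\cA_i))_{i\notin J}$ on the quotient torus $T'$ whose character lattice is $\Z^n/\Lambda(J)$. By part (ii), the number of pieces $ind(J)$ equals the lattice index $ind(G_J)$, which is the number appearing in the coefficient.

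Second, I would do the dimension count that makes this a square system. Since $J$ is essential with $\Def(J)=d(\cA)=-k$, part (i) of Theorem~\ref{toric} gives $\dim L(J) = \#J - k$, so $\dim T' = n - \dim L(J) = n-\#J+k$. The complement has cardinality $|J^c| = (n+k) - \#J = n-\#J+k$, so the reduced system on $T'$ has as many equations as $\dim T'$. Moreover, Proposition~\ref{solv} ensures that this restricted collection is generically consistent on $T'$.

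Third, one applies the classical BKK Theorem (stated in the introduction) to the reduced square generically consistent system on $T'$, with Newton polytopes $\pi_J(\Delta_i)$ for $i\notin J$, and mixed volume normalized with respect to the ambient lattice $\Z^n/\Lambda(J)$. This yields, for each of the $ind(J)$ pieces, a zero-dimensional set of cardinality
$$
(n-\#J+k)!\cdot Vol\bigl(\pi_J(\Delta_i)_{i\notin J}\bigr).
$$
Summing gives the claimed total $ind(J)\cdot(n-\#J+k)!\cdot Vol(\pi_J(\Delta_i)_{i\notin J})$, and dimension zero is immediate since a finite union of finite sets is finite.

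The main obstacle is conceptual rather than computational: one must ensure that the word \emph{generic} on $R_\cE$ (generic \emph{among consistent} systems on $(\C^*)^n$) translates to \emph{generic} in the space of \emph{all} systems defined by the reduced supports on $T'$, which is the setting in which classical BKK applies. This is precisely what Theorem~\ref{maingen} and its toric refinement Theorem~\ref{toric}(iii) provide. A secondary technical point, already absorbed into Theorem~\ref{toric}, is identifying the pushforward supports $\pi_J(\cA_i)$ as the correct ones on the quotient torus and checking that the correct normalization of mixed volume is the one induced by $\Z^n/\Lambda(J)$; both amount to tracking the character lattice through the quotient homomorphism $(\C^*)^n \twoheadrightarrow T'$ induced by the Kodaira map $\Phi_J$.
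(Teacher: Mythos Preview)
Your proposal is correct and follows exactly the route the paper indicates: the paper presents Theorem~\ref{BKK} as an application of Theorem~\ref{toric} (with the proof deferred to \cite{Mon17}), and you have simply made that application explicit by doing the dimension count and invoking the classical BKK Theorem on the quotient torus. The arithmetic $\dim L(J)=\#J-k$, $|J^c|=n-\#J+k=\dim T'$ is right, and your remark about transferring genericity from $R_\cE$ to the reduced system via Theorem~\ref{maingen}/Theorem~\ref{toric}(iii) is precisely the point that needs to be made.
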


If $k=0$ this theorem coincides with the BKK Theorem. In the case $k=1$ the generic number of solution appears as the corresponding degree of sparse resultants and was computed in \cite{D'AS}. In a similar fashion, Theorem~\ref{toric} could be applied to the computation of any other discrete invariants which can be computed by means of Newton polyhedra theory.

 \bibliographystyle{alpha}
\bibliography{thesis}
 \end{document}